\newtheorem{thm}{Theorem}[section]
\newtheorem{prop}[thm]{Proposition}
\newtheorem{lem}[thm]{Lemma}
\newtheorem{fact}[thm]{Fact}
\newtheorem{ex}[thm]{Example}
\theoremstyle{definition}
\newtheorem{defn}[thm]{Definition}
\newtheorem{ques}[thm]{Question}
\newtheorem{prob}[thm]{Problem}
\theoremstyle{remark}
\newtheorem{rmk}[thm]{Remark}
\newcommand{\N}{\mathbb{N}}
\newcommand{\Z}{\mathbb{Z}}
\newcommand{\R}{\mathbb{R}}
\newcommand{\C}{\mathbb{C}}
\newcommand{\Ha}{\mathbb{H}}
\newcommand{\K}{\mathbb{K}}
\newcommand{\mf}{\mathfrak}
\newcommand{\g}{\mathfrak{g}}
\newcommand{\h}{\mathfrak{h}}
\newcommand{\p}{\mathfrak{p}}
\newcommand{\bs}{\backslash}
\DeclareMathOperator{\GL}{GL}
\DeclareMathOperator{\SL}{SL}
\DeclareMathOperator{\SO}{SO}
\DeclareMathOperator{\SU}{SU}
\DeclareMathOperator{\Sp}{Sp}
\DeclareMathOperator{\M}{M}
\DeclareMathOperator{\Sym}{Sym}
\DeclareMathOperator{\U}{U}
\DeclareMathOperator{\Span}{Span}
\DeclareMathOperator{\Inn}{Inn}
\DeclareMathOperator{\cd}{cd}
\DeclareMathOperator{\Ad}{Ad}
\DeclareMathOperator{\ad}{ad}
\DeclareMathOperator{\trace}{trace}
\DeclareMathOperator{\diag}{diag}
\DeclareMathOperator{\Vol}{Vol}
\begin{document}

\title[Cartan projections and proper actions]
{Cartan projections of some non-reductive subgroups 
and proper actions on homogeneous spaces}
\author{Yosuke Morita}
\address{Department of Mathematics, Graduate School of Science,
Kyoto University, Kitashirakawa Oiwake-cho, Sakyo-ku,
Kyoto 606-8502, Japan}
\email{yosuke.m@math.kyoto-u.ac.jp}

\begin{abstract}
Kobayashi [\textit{Duke Math.\ J.}\ (1992)] gave a necessary condition 
for the existence of compact Clifford--Klein forms in terms of 
Cartan projections and non-compact dimensions of reductive subgroups. 
We extend his method to non-reductive subgroups, 
and give some examples of homogeneous spaces of reductive type 
that do not admit compact Clifford--Klein forms by comparing 
Cartan projections and non-compact dimensions of reductive subgroups 
with those of non-reductive subgroups. 
\end{abstract}

\maketitle

\section{Introduction}

\subsection{The problem and our approach}

Let $G$ be a Lie group and $H$ be a closed subgroup of $G$. 
If a discrete subgroup $\Gamma$ of $G$ acts properly and freely on 
$G/H$, the quotient space $\Gamma \bs G/H$ becomes a manifold 
locally modelled on $G/H$, and is called a \emph{Clifford--Klein form}.
When $H$ is compact 
(e.g.\ when $G/H$ is a Riemannian symmetric space of non-compact type), 
every discrete subgroup of $G$ acts properly on $G/H$, 
and the study of Clifford--Klein forms 
reduces to the study of discrete subgroups of $G$. 
However, when $H$ is non-compact, the study of Clifford--Klein forms 
has a very different nature, and is often much harder, 
because the properness of the action is not guaranteed. 

In this paper, we study the following problem: 

\begin{prob}\label{prob:existence}
Given a homogeneous space $G/H$, 
determine if it admits a \emph{compact} Clifford--Klein form or not. 
\end{prob}

Since the late 1980s, 
Problem~\ref{prob:existence} has been studied by many mathematicians, 
especially when $G/H$ is a homogeneous space of reductive type 
and $H$ is non-compact. 
A wide variety of methods have been applied to study this problem,
namely, the Cartan projection 
(e.g.\ \cite{Kob89}, \cite{Kob92}, \cite{Ben96}, \cite{Oku13}), 
Zimmer's cocycle superrigidity theorem
(e.g.\ \cite{Zim94}, \cite{LMZ95}, \cite{Lab-Zim95}), 
the estimate of matrix coefficients of 
infinite-dimensional unitary representations
(e.g.\ \cite{Mar97}, \cite{Sha00}), 
and the comparison of relative Lie algebra cohomology and 
de Rham cohomology 
(e.g.\ \cite{Kob-Ono90}, \cite{Ben-Lab92}, \cite{Mor15}, \cite{Tho15+}, 
\cite{Mor17Selecta}). 
These methods are applicable to partially overlapping but different 
examples. In this paper, we study Problem~\ref{prob:existence} 
by the Cartan projection method. More specifically, 
our approach is based on the following result of Kobayashi~\cite{Kob92}: 

\begin{fact}[{\cite[Th.~1.5]{Kob92}}; see Sections~\ref{sect:preliminaries-red} and 
\ref{sect:preliminaries-proper} for terminologies]
\label{fact:Kob92-1}
Let $G/H$ be a homogeneous space of reductive type. 
Let $\mu \colon G \to \mf{a}^+$ denote the Cartan projection of $G$. 
Given a Lie group $L$ with finitely many connected components, 
let $d(L)$ denote the non-compact dimension of $L$. 
Suppose that there exists a closed subgroup $H'$ reductive in $G$ 
such that 
\begin{itemize}
\item $\mu(H') \subset \mu(H)$.
\item $d(H') > d(H)$.
\end{itemize}
Then $G/H$ does not admit a compact Clifford--Klein form.
\end{fact}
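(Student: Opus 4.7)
The plan is to argue by contradiction, combining Kobayashi's properness criterion with his cohomological-dimension formula for proper actions on homogeneous spaces of reductive type. We assume that $\Gamma \bs G/H$ is a compact Clifford--Klein form; passing to a torsion-free subgroup of finite index, we may assume $\Gamma$ is torsion-free.

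The first step will invoke Kobayashi's properness criterion \cite{Kob89}: for a discrete subgroup $\Gamma \subset G$ and a closed reductive $L \subset G$, the action of $\Gamma$ on $G/L$ is proper if and only if for every bounded $C \subset \mf{a}$ the set $\mu(\Gamma) \cap (\mu(L) + C)$ is bounded. Properness of $\Gamma$ on $G/H$ gives that $\mu(\Gamma) \cap (\mu(H) + C)$ is bounded for every such $C$. Since $\mu(H') \subset \mu(H)$, we have $\mu(H') + C \subset \mu(H) + C$, and hence $\mu(\Gamma) \cap (\mu(H') + C)$ is bounded as well. Applying the criterion in the converse direction (now with $L = H'$) shows that $\Gamma$ acts properly on $G/H'$.

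The second step will invoke the cohomological-dimension bound from \cite{Kob89}: for a torsion-free discrete $\Gamma \subset G$ acting properly on $G/L$ with $L$ reductive in $G$,
\[
\cd_{\R}(\Gamma) \leq d(G) - d(L),
\]
with equality when $\Gamma \bs G/L$ is compact. Applying the equality with $L = H$ and the inequality with $L = H'$ yields
\[
d(G) - d(H) = \cd_{\R}(\Gamma) \leq d(G) - d(H'),
\]
so $d(H') \leq d(H)$, contradicting the hypothesis $d(H') > d(H)$.

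The first step is essentially formal once the properness criterion is in hand. The main obstacle is the cohomological-dimension bound used in the second step; the proof of that bound in \cite{Kob89} analyzes the fibration $K_G/K_L \to \Gamma \bs G/K_L \to \Gamma \bs G/K_G = B\Gamma$, using that $G/K_G$ is contractible and that $G/K_L$ deformation retracts onto the compact submanifold $K_G/K_L$; the cocompact case further promotes the inequality to an equality via Poincar\'e duality on the resulting compact Clifford--Klein form.
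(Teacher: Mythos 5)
Your proof is correct and follows essentially the same route as the paper's: the statement itself is quoted from Kobayashi, but the paper proves its generalization (Proposition~\ref{prop:Kob92-2} via Lemma~\ref{lem:Kob92-refine}) by exactly your two-step scheme --- transfer properness of the $\Gamma$-action from $G/H$ to $G/H'$ using Cartan projections, then play the equality $\cd_k\Gamma = d(G)-d(H)$ against the inequality $\cd_k\Gamma \leqslant d(G)-d(H')$. The only cosmetic difference is that you invoke the hard direction of the properness criterion to conclude properness on $G/H'$, whereas the paper gets this step elementarily from the $\prec$/$\pitchfork$ calculus (Fact~\ref{fact:proper-action-general}~(3), (4), (6)), since $\mu(H')\subset\mu(H)$ already yields $H'\prec H$ directly.
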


Our starting point is to generalize Fact~\ref{fact:Kob92-1} 
in the following way: 

\begin{prop}\label{prop:Kob92-2}
Let $G/H$, $\mu$, and $d$ be as in Fact~\ref{fact:Kob92-1}. 
\begin{enumerate}[label = (\arabic*)]
\item Suppose that there exists a closed subgroup $H'$ 
with finitely many connected components (possibly not reductive in $G$) 
such that 
\begin{itemize}
\item $\mu(H') \subset \mu(H) + \mathcal{C}$ 
for some compact subset $\mathcal{C}$ of $\mf{a}$.
\item $d(H') > d(H)$.
\end{itemize}
Then $G/H$ does not admit a compact Clifford--Klein form.
\item Suppose that there exists a closed subgroup $H'$ 
with finitely many connected components (possibly not reductive in $G$) 
such that 
\begin{itemize}
\item $\mu(H') \subset \mu(H) + \mathcal{C}$ 
for some compact subset $\mathcal{C}$ of $\mf{a}$.
\item $d(H') = d(H)$.
\item $G/H'$ does not admit a compact Clifford--Klein form. 
\end{itemize}
Then $G/H$ does not admit a compact Clifford--Klein form.
\end{enumerate}
\end{prop}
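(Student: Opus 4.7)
The plan is to mirror Kobayashi's proof of Fact~\ref{fact:Kob92-1}, strengthened so that the auxiliary subgroup $H'$ need only be closed with finitely many components (not necessarily reductive in $G$) and so that the inclusion of Cartan projections may hold up to a compact shift.

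First, I would assume towards a contradiction that $G/H$ admits a compact Clifford--Klein form. By Selberg's lemma, replacing the uniform lattice with a torsion-free finite-index subgroup $\Gamma$, we may assume $\Gamma$ acts properly, freely, and cocompactly on $G/H$; Kobayashi's classical identity for proper cocompact actions of reductive type then gives $\cd_\Q \Gamma = d(G) - d(H)$. The first nontrivial step is to transfer properness from $G/H$ to $G/H'$. Kobayashi's properness criterion applied to $\Gamma$ and $H$ says that $\mu(\Gamma) \cap (\mu(H) + \mathcal{C}')$ is relatively compact for every compact $\mathcal{C}' \subset \mf{a}$; combining with $\mu(H') \subset \mu(H) + \mathcal{C}$ yields
\[
\mu(\Gamma) \cap (\mu(H') + \mathcal{C}') \subset \mu(\Gamma) \cap \bigl(\mu(H) + (\mathcal{C} + \mathcal{C}')\bigr),
\]
which is again relatively compact. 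A version of the properness criterion valid for closed subgroups with finitely many components then forces $\Gamma$ to act properly on $G/H'$ as well (and freely, since $\Gamma$ is torsion-free).

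For part~(1), I would invoke the cohomological-dimension bound $\cd_\Q \Gamma \le d(G) - d(H')$ for proper actions on $G/H'$, extended from Kobayashi's reductive-case inequality to closed subgroups with finitely many components. Comparing with $\cd_\Q \Gamma = d(G) - d(H)$ then forces $d(H') \le d(H)$, contradicting $d(H') > d(H)$. For part~(2), the same bound together with $d(H') = d(H)$ gives the equality $\cd_\Q \Gamma = d(G) - d(H')$, and the corresponding cocompactness criterion---saturation of the dimension inequality implies compactness of $\Gamma \bs G/H'$---would then produce a compact Clifford--Klein form of $G/H'$, contradicting the third hypothesis.

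The main obstacle is to verify the two generalizations invoked above, namely Kobayashi's properness criterion and the $\cd_\Q$ bound together with its equality case, for closed subgroups $H'$ with finitely many components rather than genuinely reductive ones. I would expect the $KAK$ decomposition in $G$ to allow the properness criterion to be phrased for arbitrary closed subsets of $G$ with relatively little effort, whereas the dimension estimates appear more delicate: one should pick a maximal compact subgroup $K_{H'}$ of $H'$ (so that $d(H') = \dim H' - \dim K_{H'}$) and, plausibly, invoke a deformation-retract argument that replaces $H'$ by its noncompact part. I expect the paper's preliminary sections to develop precisely these tools.
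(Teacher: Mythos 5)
Your proposal is correct and follows essentially the same route as the paper: argue by contradiction via Selberg's lemma, use the equality $\cd_k\Gamma = d(G)-d(H)$ for compact Clifford--Klein forms, transfer properness of the $\Gamma$-action to $G/H'$ through the Cartan projection, and then apply the inequality/equality cases of the cohomological-dimension bound. The only cosmetic differences are that the paper transfers properness via the elementary relation $H'\prec H$ (using $H\sim\exp(\mu(H)+\mathcal{C})$, Fact~\ref{fact:proper-action-general}) rather than the full Benoist--Kobayashi properness criterion, and that the generalized dimension bound you flag as delicate is exactly Fact~\ref{fact:cohomological-dimension}, which is already available for closed subgroups with finitely many connected components.
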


The proof of Proposition~\ref{prop:Kob92-2} is similar to that of 
Fact~\ref{fact:Kob92-1}, although the author could not find the statement 
explicitly stated somewhere else. 

At a first glance, it is unclear if Proposition~\ref{prop:Kob92-2} 
is a meaningful generalization, 
because of the following two difficulties: 
\begin{itemize}
\item To apply Proposition~\ref{prop:Kob92-2}~(1) or (2), 
we need to find a closed subgroup $H'$ 
whose Cartan projection is contained in that of $H$ 
up to compact subsets. Unlike the reductive case, 
determining the Cartan projection of a non-reductive closed subgroup 
might be difficult. 
\item To apply Proposition~\ref{prop:Kob92-2}~(2), we need to choose 
$H'$ so that the non-existence of compact Clifford--Klein forms of 
$G/H'$ can be proved by some other method. 
\end{itemize}
However, these difficulties are resolved to some extent by 
the development of the study of Clifford--Klein forms: 
\begin{itemize}
\item Some non-reductive subgroups have been proved to have 
good Cartan projections 
(Kobayashi~\cite[Ex.~6.5]{Kob96}, 
Oh--Morris~\cite{Oh-Wit00}, \cite{Oh-Wit02}, \cite{Oh-Mor04}). 
\item Some methods to prove the non-existence of compact Clifford--Klein 
forms in the non-reductive setting have been found. 
Perhaps, these methods can be used to apply 
Proposition~\ref{prop:Kob92-2}~(2) to some concrete examples. 
\end{itemize}
These suggest the possibility that we can find some examples of $G/H$ 
for which Proposition~\ref{prop:Kob92-2} is applicable, while 
Fact~\ref{fact:Kob92-1} is not. 
We show in this paper that this is indeed the case. 

\subsection{Main results}

Now, let us state some results proved by the above idea.
First, we obtain the following result: 

\begin{thm}\label{thm:main-sl}
Let $\K = \R, \C$, or $\Ha$. 
If $m \geqslant 2$ and $n \geqslant 5m/4 + \varepsilon(m, \K)$, then 
$G/H = \SL(n, \K) / \SL(m, \K)$ 
does not admit a compact Clifford--Klein form. Here, the number 
$\varepsilon(m, \K)$ is defined by: 
\begin{align*}
\varepsilon(m, \R) &= 
\begin{cases}
1 & \text{if $m \equiv 0 \pmod 4$}, \\
7/4 & \text{if $m \equiv 1 \pmod 4$}, \\
1/2 & \text{if $m \equiv 2 \pmod 4$}, \\
9/4 & \text{if $m \equiv 3 \pmod 4$}, 
\end{cases} \\
\varepsilon(m, \C) &= 
\begin{cases}
0 & \text{if $m \equiv 0 \pmod 4$}, \\
7/4 & \text{if $m \equiv 1 \pmod 4$}, \\
1/2 & \text{if $m \equiv 2 \pmod 4$}, \\
5/4 & \text{if $m \equiv 3 \pmod 4$}, 
\end{cases} \\
\varepsilon(m, \Ha) &= 
\begin{cases}
0 & \text{if $m \equiv 0 \pmod 4$}, \\
3/4 & \text{if $m \equiv 1 \pmod 4$}, \\
1/2 & \text{if $m \equiv 2 \pmod 4$}, \\
5/4 & \text{if $m \equiv 3 \pmod 4$}. 
\end{cases}
\end{align*}
\end{thm}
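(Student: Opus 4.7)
The plan is to apply Proposition~\ref{prop:Kob92-2}. Embed $H = \SL(m, \K)$ as the upper-left block of $G = \SL(n, \K)$; then $\mu(H) \subset \mathfrak{a}^+$ consists of all $\mathfrak{a}^+$-elements with at most $m$ nonzero coordinates summing to zero, since an element of $\SL(m, \K)$ with singular values $e^{y_1}, \ldots, e^{y_m}$ embeds with singular values $(e^{y_1}, \ldots, e^{y_m}, 1, \ldots, 1)$. Setting $p = \lfloor m/2 \rfloor$, I take
\[
H' = \SL(p, \K) \ltimes \M(p \times (n-p), \K),
\]
embedded in $G$ via $(A, X) \mapsto \left(\begin{smallmatrix} A & X \\ 0 & I_{n-p} \end{smallmatrix}\right)$, with $\SL(p, \K)$ acting on the matrix factor by left multiplication. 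Since the unipotent factor has no compact part, $d(H') = d(\SL(p, \K)) + p(n-p) \dim_\R \K$.

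For the Cartan projection inclusion $\mu(H') \subset \mu(H) + \mathcal{C}$, I use the polar decomposition $A = k_1 D k_2$ in $\SL(p, \K)$ and the SVD of $X$ to reduce, modulo $K \times K$ on each side of $\SL(n,\K)$, to analyzing singular values of a block matrix whose nontrivial blocks are $2 \times 2$ of the form $\left(\begin{smallmatrix} e^{y_i} & e^{z_j} \\ 0 & 1 \end{smallmatrix}\right)$. Each such block contributes two singular values of magnitude $\sim \max(e^{y_i}, e^{z_j})$ and $\sim e^{y_i}/\max(e^{y_i}, e^{z_j})$, so the total number of nonzero ``scales'' in $\mu((A,X))$ is at most $2p \leq m$, matching the structure of $\mu(H)$. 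A case analysis on the signs and relative magnitudes of the $y_i$ and $z_j$ then verifies the inclusion. This is precisely the kind of Cartan projection computation developed in the Oh--Morris framework~\cite[Ex.~6.5]{Kob96}, \cite{Oh-Wit00, Oh-Wit02, Oh-Mor04}.

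For the dimension inequality $d(H') \geq d(H)$, I substitute the standard formulas $d(\SL(s, \R)) = (s-1)(s+2)/2$, $d(\SL(s, \C)) = s^2 - 1$, $d(\SL(s, \Ha)) = 2s^2 - s - 1$, together with $\dim_\R \K \in \{1, 2, 4\}$ and $p = \lfloor m/2 \rfloor$; the condition simplifies to $n \geq 5m/4 + \varepsilon(m, \K)$ with the stated $\varepsilon$. The leading term $5m/4$ is the value of the continuous bound (a linear-plus-reciprocal function of $p$) at the endpoint $p = m/2$ of the feasible region imposed by $2p \leq m$; the correction $\varepsilon(m, \K)$ records the integer-part issues depending on $m \bmod 4$ and the $\K$-dependent lower-order constant. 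When $d(H') > d(H)$ holds, Proposition~\ref{prop:Kob92-2}~(1) concludes immediately; in the boundary cases $d(H') = d(H)$, Proposition~\ref{prop:Kob92-2}~(2) applies provided $G/H'$ admits no compact Clifford--Klein form, which is verified by auxiliary methods (for instance, cohomological obstructions as in~\cite{Kob-Ono90}).

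The hard part will be the Cartan projection inclusion in the mixed regime where $A$ and $X$ are both large at possibly different rates. A naive attempt with $p > m/2$ fails: the element $\left(\begin{smallmatrix} e^y & 0 & e^{2y} \\ 0 & e^{-y} & 0 \\ 0 & 0 & 1 \end{smallmatrix}\right) \in \SL(2, \R) \ltimes \R^2 \subset \SL(3, \R)$ has Cartan projection $(2y, -y, -y)$, which escapes $\mu(\SL(2, \R)) + \mathcal{C} = \{(z, 0, -z) : z \geq 0\} + \mathcal{C}$. The restriction $p \leq m/2$---equivalently, the threshold $n \geq 5m/4 + \varepsilon(m, \K)$---is precisely what prevents this phenomenon and ensures that the Cartan projection of any element of $H'$ remains within $\mu(H) + \mathcal{C}$.
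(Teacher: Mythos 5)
Your choice of $H' = \SL(p,\K) \ltimes \M(p, n-p; \K)$ with $p = \lfloor m/2 \rfloor$, and the dimension count producing the threshold $n \geqslant 5m/4 + \varepsilon(m,\K)$, coincide with the paper's proof, but two of your steps do not go through as described. For the Cartan projection, you cannot simultaneously bring $A$ to diagonal form via its $KAK$ decomposition and $X$ to diagonal form via its singular value decomposition: the left factor in $\SO(p)$ (resp.\ $\mathrm{U}(p)$, $\Sp(p)$) is shared between the two reductions, so the claimed decoupling into independent $2\times 2$ blocks $\left(\begin{smallmatrix} e^{y_i} & e^{z_j} \\ 0 & 1 \end{smallmatrix}\right)$ --- and hence the asserted asymptotics of the singular values and the ensuing ``case analysis'' --- has no valid starting point. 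The correct move is to do \emph{less}: conjugate $\left(\begin{smallmatrix} g & X \\ 0 & I_{n-p} \end{smallmatrix}\right)$ by $\diag(\ell,\ell')$ with $(\ell,\ell')$ chosen only so that $\ell X \ell'^{-1} = \left(\begin{smallmatrix}\diag(t_1,\dots,t_p) & 0\end{smallmatrix}\right)$. The result is supported in the upper-left $2p \times 2p$ block and so lies in $\SL(2p,\K) \subset \SL(m,\K) = H$ (using $2p \leqslant m$), giving $\mu(H') \subset \mu(H)$ exactly, with no computation of singular values at all. Note also that ``at most $2p \leqslant m$ nonzero scales'' is not by itself a proof of the inclusion until you know the element is actually ($K$-bi-equivalent to) an element of $H$.

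The more serious gap is condition (c). At the stated threshold one can have $d(H') = d(H)$ (e.g.\ $\K=\R$, $m \equiv 2 \pmod 4$, $n = 5m/4 + 1/2$), so Proposition~\ref{prop:Kob92-2}~(2) is genuinely needed, and it requires a proof that $G/H'$ admits no compact Clifford--Klein form. You defer this to ``auxiliary methods (for instance, cohomological obstructions as in~\cite{Kob-Ono90})'', but that reference concerns reductive $H$ and does not apply to the nonreductive $H'$; as written the argument is incomplete exactly where it matters. The paper closes this step with Proposition~\ref{prop:trace-free}: the $\h'$-action on $\g/\h'$ is trace-free because $G$ and $H'$ are unimodular, while $X_0 = \diag\bigl((n-p)I_p, -pI_{n-p}\bigr) \in \mf{n}_\g(\h')$ satisfies $\trace(\ad_{\g/\h'}(X_0)) = -np(n-p) \neq 0$. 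You need this (or an equivalent argument valid for nonreductive $H'$) to complete the proof.
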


\begin{rmk}[cf.\ Remark~\ref{rmk:sl-other}]
Unfortunately, when $\K = \R$ or $\C$, Theorem~\ref{thm:main-sl} 
is not as sharp as the result of Labourie--Zimmer~\cite{Lab-Zim95} 
for a large $m$, although our method is, 
as far as the author understands, completely different from theirs. 
\end{rmk}

Also, we obtain a new proof of the following result: 

\begin{thm}[{\cite[Cor.~6.2]{Mor17Selecta}}]\label{thm:main-associated}
Let $G/H$ be a reductive symmetric space and 
$(\g, \h)$ be the corresponding reductive symmetric pair. Assume that: 
\begin{itemize}
\item $H$ is non-compact. 
\item The associated symmetric subalgebra $\h^a = \mf{k} \cap \h \oplus \p \cap \mf{q}$ (cf.~Definition~\ref{defn:associated}) 
is a Levi subalgebra of $\g$. 
\end{itemize}
Then, $G/H$ does not admit a compact Clifford--Klein form. 
\end{thm}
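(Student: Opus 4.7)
The plan is to apply Proposition~\ref{prop:Kob92-2} to $(G,H)$ with a nonreductive subgroup $H'$ built from the parabolic whose Levi is $\h^a$. Since $\h^a$ is a Levi, choose $\mf{a} \subset \h^a \cap \p$ maximally abelian and $X_0 \in \mf{a}$ such that $\h^a = Z_\g(X_0)$, and let $\mf{n}$ be the sum of positive-eigenvalue eigenspaces of $\ad X_0$, so that $\h^a \oplus \mf{n}$ is a parabolic subalgebra of $\g$ with nilpotent radical $\mf{n}$. Since $\sigma$ and $\theta$ commute and $X_0 \in \h^a \cap \p$, one has $\sigma\theta(X_0) = X_0$ and $\theta(X_0) = -X_0$; hence $\sigma(X_0) = -X_0$, so $X_0 \in \mf{q}$ and $\sigma(\mf{n}) = \theta(\mf{n}) = \mf{n}^-$.

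Take $H' := (H \cap K) \ltimes N$, the closed subgroup with Lie algebra $\mf{h}' := (\h \cap \mf{k}) \oplus \mf{n}$ (a Lie subalgebra, since $\h \cap \mf{k} \subset \h^a$ normalizes $\mf{n}$). Its maximal compact subalgebra is $\h \cap \mf{k}$, so $d(H') = \dim \mf{n}$. Intersecting $\g = \h^a \oplus \mf{n} \oplus \mf{n}^-$ with $\p$ and identifying $(\mf{n} \oplus \mf{n}^-) \cap \p = \{X - \theta X : X \in \mf{n}\}$ via $\theta$-symmetry gives $\dim \p = \dim(\mf{q} \cap \p) + \dim \mf{n}$; combined with $\dim \p = \dim(\h \cap \p) + \dim(\mf{q} \cap \p)$, this yields $\dim \mf{n} = \dim(\h \cap \p)$, so $d(H') = d(H)$.

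The main technical step is the Cartan projection containment $\mu(H') \subset \mu(H) + \mathcal{C}$ for some compact $\mathcal{C}$. By bi-$K$-invariance of $\mu$ and compactness of $H \cap K$, $\mu(H') = \mu(N)$. For each $X \in \mf{n}$, the pair $\{X, -\theta X\}$ extends to a Jacobson--Morozov $\mf{sl}_2$-triple $\{X, h_X, -\theta X\}$ in $\g$, with $h_X := [X, -\theta X] \in \mf{q} \cap \p$ and hyperbolic element $Y_X := X - \theta X \in \h \cap \p$; moreover $h_X$ and $Y_X$ are $\Ad(K)$-conjugate, so both $\mu(\exp X)$ and $\mu(\exp s Y_X)$ lie on the same half-line of $\mf{a}^+$ (for each $s$). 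By the rank-one computation in $\SL(2,\R)$, $\mu(\exp X)$ grows like $\log\|X\|$ while $\mu(\exp s Y_X)$ grows linearly in $s$, so choosing $s = s(X) \sim \log\|X\|$ yields $|\mu(\exp X) - \mu(\exp s Y_X)| \leq C$ uniformly, whence $\mu(N) \subset \mu(H) + \mathcal{C}$.

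To apply Proposition~\ref{prop:Kob92-2}(2) one must additionally show that $G/H'$ admits no compact Clifford--Klein form. This I would handle either by a further Cartan-projection comparison (adjoining to $H'$ an appropriate one-parameter subgroup of $G$ so that the combined subgroup has strictly greater noncompact dimension while retaining the containment of Cartan projections, producing a volume-growth contradiction in the spirit of the proof of Proposition~\ref{prop:Kob92-2}(1)), or by a direct cohomological vanishing argument for $(\g, \mf{h}')$. The principal obstacle in the argument is the uniform Cartan projection estimate $\mu(\exp X) = \mu(\exp s(X) Y_X) + O(1)$ required in the third paragraph: while this reduces to the elementary $\SL(2,\R)$ computation in the single-root case, controlling it uniformly across the stratification of $\mf{n}$ by nilpotent $G$-orbits is where the main technical work lies.
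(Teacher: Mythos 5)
Your overall architecture coincides with the paper's: the same subgroup $H' = (K\cap H)_0 \ltimes U$ built from the horospherical (nilradical) part attached to the Levi $\h^a$, the same application of Proposition~\ref{prop:Kob92-2}~(2), and the same dimension count $d(H')=\dim\mf{u}=\dim(\p\cap\h)=d(H)$ via $X\mapsto X-\theta X$. However, your third paragraph --- the Cartan projection containment --- has a genuine gap, which you yourself flag. The argument ``attach to each $X\in\mf{n}$ its Jacobson--Morozov triple, note $\mu(\exp X)\sim\log\|X\|$ in the direction of $\overline{h_X}$, and match it with $\exp(s(X)Y_X)\in H$'' is only an asymptotic statement along each one-parameter unipotent subgroup; to get $\mu(N)\subset\mu(H)+\mathcal{C}$ you need the $O(1)$ error to be uniform over all of $\mf{n}$, and this fails to be automatic precisely because the semisimple part $h_X$ (even its conjugacy class) jumps across the nilpotent strata of $\mf{n}$, so the implied constants degenerate near stratum boundaries. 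The paper avoids this entirely: it chooses a maximal ``strongly orthogonal'' system of root vectors $X_1,\dots,X_r\in\mf{u}$ (Lemma~\ref{lem:in-p} guaranteeing termination), so that $\g'=\bigoplus_k(\R X_k\oplus\R\theta X_k\oplus\R[X_k,\theta X_k])$ is a commuting product of copies of $\mf{sl}(2,\R)$, proves that $\mf{a}'=\bigoplus_k\R(X_k-\theta X_k)$ is maximal abelian in \emph{both} $\p\cap\g'$ and $\p\cap\h$ (Lemma~\ref{lem:maximal-abelian}, the real technical work), and then gets the \emph{exact} equality $\mu(H')=\mu(U')=\mu(G')=\mu(A')=\mu(H)$ from the elementary identity $\SO(2)\cdot U_{\SL(2,\R)}\cdot\SO(2)=\SL(2,\R)$ applied factorwise, together with $\mf{u}=\Ad((K\cap H)_0)\mf{u}'$. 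Some such global linearization is needed; the pointwise $\mf{sl}_2$ matching as you state it does not yield the required uniform bound.

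A secondary, easily repaired looseness: for condition (c) you offer two vague alternatives, but the paper closes this step in one line with Proposition~\ref{prop:trace-free} --- the $\h'$-action on $\g/\h'$ is trace-free ($G$ and $H'$ being unimodular), while any nonzero element of $\mf{z}(\h^a)\cap[\g,\g]$ (essentially your grading element $X_0$) normalizes $\h'$ and acts on $\g/\h'$ with nonzero trace. Your first alternative (enlarging $H'$ by a one-parameter group to raise $d$) would in any case need its own Cartan projection control and is not obviously available; you should simply invoke Proposition~\ref{prop:trace-free}.
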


\begin{ex}\label{ex:sl-so}
If $p,q \geqslant 1$, then 
$\SL(p+q, \R)/\SO_0(p,q)$ does not admit a compact Clifford--Klein form. 
\end{ex}

\begin{rmk}\label{rmk:main-associated-first}
The proof of Theorem~\ref{thm:main-associated} that we gave in \cite{Mor17Selecta} 
is based on the comparison of relative Lie algebra cohomology and 
de Rham cohomology. 
See \cite[Table 3]{Mor17Selecta} for the list of 
irreducible symmetric spaces for which Theorem~\ref{thm:main-associated} applies. 
This list is obtained from 
Lemma~\ref{lem:symmetric-levi}~(2) (i)~$\Leftrightarrow$~(v). 
\end{rmk}

\subsection{Related previous results}

We mention some previous results related to our main results. 

\begin{rmk}\label{rmk:sl-other}
Let $n > m \geqslant 2$.
Let us summarize previous results on the non-existence of 
compact Clifford--Klein forms of 
$\SL(n, \K) / \SL(m, \K)$ ($\K = \R, \C$, or $\Ha$): 
\begin{enumerate}[label = (\roman*)]
\item For $m \geqslant 2$, 
define $\delta(m, \K)$ as follows: 
\begin{align*}
\delta(m, \R) &= 
\begin{cases}
1 & \text{if $m$: even}, \\
5/2 & \text{if $m$: odd},
\end{cases} \\
\delta(m, \C) &= 
\begin{cases}
0 & \text{if $m$: even}, \\
3/2 & \text{if $m$: odd},
\end{cases} \\
\delta(m, \Ha) &= 
\begin{cases}
0 & \text{if $m$: even}, \\
1/2 & \text{if $m$: odd}.
\end{cases}
\end{align*}
Using Fact~\ref{fact:Kob92-1}, 
Kobayashi~\cite{Kob92}, \cite{Kob97} proved that
$\SL(n, \K) / \SL(m, \K)$ does not admit a compact Clifford--Klein form 
when $n \geqslant 3m/2 + \delta(m, \K)$. 
\item Applying Zimmer's cocycle superrigidity theorem, 
Zimmer~\cite{Zim94}, Labourie--Mozes--Zimmer~\cite{LMZ95} proved that 
$\SL(n, \K)/\SL(m, \K)$ does not admit a compact Clifford--Klein form 
when $n \geqslant \max \{ 2m, 5 \}$. 
\item Improving the results in \cite{Zim94} and \cite{LMZ95}, 
Labourie--Zimmer~\cite{Lab-Zim95} proved that 
$\SL(n, \K)/\SL(m, \K)$ does not admit a compact Clifford--Klein form 
when $n \geqslant m+3$ and $\K = \R$ or $\C$. 
As far as the author understands, although they gave a proof only for 
$\K = \R$, it equally works for $\K = \C$, while not for $\K = \Ha$.
\item Using Cartan projections (while not Fact~\ref{fact:Kob92-1}), 
Benoist~\cite{Ben96} proved that $\SL(m+1, \K) / \SL(m, \K)$ 
does not admit a compact Clifford--Klein form when $m$ is even. 
\item Applying the estimate of matrix coefficients, Shalom~\cite{Sha00} 
proved that $\SL(n, \K) / \SL(2, \K)$ 
does not admit a compact Clifford--Klein form when $n \geqslant 4$. 
\item Comparing relative Lie algebra cohomology and de Rham cohomology, 
Tholozan~\cite{Tho15+} and the author \cite{Mor17Selecta} proved that 
$\SL(n, \R) / \SL(m, \R)$ does not admit a compact Clifford--Klein form 
when $m$ is even. Note that their proofs do not work for 
$\K = \C$ or $\Ha$. 
\end{enumerate}

Thus, 
\begin{itemize}
\item When $\K = \R$, Theorem~\ref{thm:main-sl} is covered by previous works. 
\item When $\K = \C$, our result seems new for $(n,m) = (5,3), (8,6), (10,8)$. 
\item When $\K = \Ha$, our result seems new for infinitely many $(n, m)$. 
\end{itemize}

There are two other closely related results: 
\begin{enumerate}[label = (\roman*)]
\item[(vii)] For $n \geqslant 1$, let 
$\alpha_n \colon \SL(2, \R) \to \SL(n, \R)$ 
be the $n$-dimensional irreducible real representation of $\SL(2, \R)$. 
Applying the estimate of matrix coefficients, 
Margulis~\cite{Mar97} proved that $\SL(n, \R) / \alpha_n(\SL(2, \R))$ 
does not admit a compact Clifford--Klein form when $n \geqslant 4$. 
\item[(viii)] 
Given a reductive Lie group $G$ with Lie algebra $\g$ and 
Cartan involution $\theta$, 
let $K = G^\theta$ and $\p = \g^{-\theta}$ 
be the maximal compact subgroup of $G$ 
and the $(-1)$-eigenspace of $\theta$ in $\g$, respectively, 
and define the \emph{Cartan motion group} 
$G_\theta$ associated with $G$ by $G_\theta = K \ltimes \p$. 
For $m \geqslant 2$, let $\varepsilon(m, \K)$ be as in 
Theorem~\ref{thm:main-sl}, and define $\varepsilon'(m, \K)$ as follows: 
\begin{align*}
\varepsilon'(m, \R) &= 
\begin{cases}
\varepsilon(m, \R) + 1 & \text{if $m \equiv 2 \pmod 4$}, \\
\varepsilon(m, \R) & \text{otherwise}, 
\end{cases} \\
\varepsilon'(m, \C) &= 
\begin{cases}
\varepsilon(m, \C) + 1 & \text{if $m \equiv 0 \pmod 4$}, \\
\varepsilon(m, \C) & \text{otherwise}, 
\end{cases} \\
\varepsilon'(m, \Ha) &= \varepsilon(m, \Ha).
\end{align*}
Using an analogue of Fact~\ref{fact:Kob92-1} for Cartan motion groups 
(\cite[Th.~9]{Yos07IJM}), Yoshino~\cite{Yos07RepSymp} proved that
$\SL(n, \K)_\theta / \SL(m, \K)_\theta$ 
does not admit a compact Clifford--Klein form 
when $n \geqslant 5m/4 + \varepsilon'(m, \K)$.
\end{enumerate}
Our proof of Theorem~\ref{thm:main-sl} is inspired by (i) and (vii). 
See Remarks~\ref{rmk:compare-Kob92} and \ref{rmk:compare-Yos07RepSymp}. 
\end{rmk}

\begin{rmk}
Although Theorem~\ref{thm:main-associated} was first proved in 
\cite[Cor.~6.2]{Mor17Selecta}, 
some special cases had been proved before that 
(cf.\ Kobayashi~\cite{Kob89}, \cite{Kob92}, 
Kobayashi--Ono~\cite{Kob-Ono90}, Benoist--Labourie~\cite{Ben-Lab92}, 
Benoist~\cite{Ben96}, Okuda~\cite{Oku13}, the author~\cite{Mor15}).
For example, the non-existence of compact Clifford--Klein forms of 
$\SL(p+q, \R) / \SO_0(p,q) \ (p,q \geqslant 1)$ 
had been known before \cite{Mor17Selecta} in some cases: 
\begin{enumerate}[label = (\roman*)]
\item Using Fact~\ref{fact:Kob92-1}, 
Kobayashi~\cite{Kob92} solved the case of $p=q$. 
\item Using Cartan projections (while not Fact~\ref{fact:Kob92-1}), 
Benoist~\cite{Ben96} solved the case of $p=q$ or $q+1$. 
\item Comparing relative Lie algebra cohomology and de Rham cohomology, 
the author~\cite{Mor15} solved the case where $p$ and $q$ are odd. 
\end{enumerate}
\end{rmk}

\subsection{Outline of the paper}\label{subsect:outline}

In Section~\ref{sect:preliminaries-red}, 
we recall basic definitions in the theory of reductive Lie groups, and 
prove some lemmas needed in Section~\ref{sect:proof-main-associated}.
In Section~\ref{sect:preliminaries-proper}, 
we recall basic definitions and results on 
proper actions on homogeneous spaces. 
In Sections~\ref{sect:proof-Kob92-2}, \ref{sect:proof-main-sl}, 
and \ref{sect:proof-main-associated},
we prove Proposition~\ref{prop:Kob92-2}, Theorem~\ref{thm:main-sl}, 
and Theorem~\ref{thm:main-associated}, respectively. 
In Section~\ref{sect:proof-main-associated}, 
we first prove Theorem~\ref{thm:main-associated} in a special case, 
$G/H = \SL(p+q, \R)/\SO_0(p,q)$, 
so as to clarify the strategy of our proof. We then give a proof of the general case. 
Section~\ref{sect:proof-main-associated} 
can be read independently from Section~\ref{sect:proof-main-sl}. 
Finally, we give some remarks and mention some open questions 
raised by this work in Section~\ref{sect:remarks-questions}. 

\section{Preliminaries on Reductive Lie groups}
\label{sect:preliminaries-red}

\subsection{Structure of reductive Lie groups}\label{subsect:real-red}

Let $G$ be a reductive Lie group with Lie algebra $\g$ 
(as usual, we use uppercase Roman letters for Lie groups and 
lowercase German letters for the corresponding Lie algebras). 
Fix a Cartan involution $\theta$ on $G$. 
Let $K = G^\theta$ be the maximal compact subgroup of $G$, 
and let $\g = \mf{k} \oplus \p$ be the eigenspace decomposition of $\g$ 
with respect to $\theta$. Note that $\mf{k}$ is the Lie algebra of $K$. 
Fix a maximal abelian subspace $\mf{a}$ of $\p$ 
(called the \emph{Cartan subspace} or 
the \emph{maximal split abelian subspace} of $\g$), and put
\[
\g_\lambda = 
\{ X \in \g \mid [Z, X] = \lambda(Z) X \ \text{for any} \ Z \in \mf{a} \}
\]
for each $\lambda \in \mf{a}^\ast$. Let 
\[
\g = \g_0 \oplus \bigoplus_{\lambda \in \Sigma} \g_\lambda, \qquad 
\Sigma = \{ \lambda \in \mf{a}^\ast \mid 
\g_\lambda \neq 0 \} \smallsetminus \{ 0 \}
\]
be the restricted root-space decomposition of $\g$ relative to $\mf{a}$. 
Fix a simple system $\Pi$ of the restricted root system $\Sigma$. 
Let us define a partial order $\leqslant_\Pi$ 
on $\Sigma \sqcup \{ 0 \}$ as follows: 
\begin{itemize}
\item $\lambda \leqslant_\Pi \lambda'$ holds if and only if 
$\lambda' - \lambda$ is written as a linear combination of 
elements of $\Pi$ with non-negative integer coefficients. 
\end{itemize}
There exists a unique maximal element 
$\widetilde{\lambda}$ of $\Sigma \sqcup \{ 0 \}$ 
with respect to the $\leqslant_\Pi$, 
called the \emph{highest root} of $\Sigma$ relative to $\Pi$. 
Let $\mf{a}^+ \subset \mf{a}$ be the closed positive Weyl chamber 
with respect to $\Pi$. 
The $KAK$ decomposition theorem (see e.g.\ \cite[Th.~7.39]{Kna02}) 
says that every element $g \in G$ may be written as $g = k_1\exp(X)k_2$ 
for some $k_1, k_2 \in K$ and some $X \in \mf{a}^+$, 
and the choice of such $X$ is unique. 

\begin{defn}
We define the \emph{Cartan projection} $\mu \colon G \to \mf{a}^+$ by 
$\mu(g) = X$, which is continuous and proper. 
\end{defn}

We fix a $G$-invariant and $\theta$-invariant 
non-degenerate symmetric bilinear form $B$ on $\g$ such that
\[
B_\theta \colon \g \times \g \to \R, \qquad 
(X, Y) \mapsto -B(X, \theta Y)
\]
is positive definite. If $G$ is semisimple, one may take $B$ to be the Killing form 
\[
B(X, Y) = \trace(\ad(X) \ad(Y)). 
\]
We write $\|{-}\|_\theta$ for the norm associated with $B_\theta$. 

\subsection{Levi subalgebras and horospherical subalgebras}
\label{subsect:levi-horospherical}

Let $\Sigma_+$ and $\Sigma_-$ denote the sets of positive and negative 
roots in $\Sigma$ relative to the simple system $\Pi$, respectively. 
Given a subset $\Pi'$ of $\Pi$, decompose $\Sigma$ as 
\[
\Sigma = 
\Sigma_{\Pi', +} \sqcup \Sigma_{\Pi', 0} \sqcup \Sigma_{\Pi', -}, 
\]
where 
\[
\Sigma_{\Pi', 0} = \Sigma \cap \Span_\Z \Pi', \qquad 
\Sigma_{\Pi', +} = \Sigma_+ \smallsetminus \Sigma_{\Pi', 0}, \qquad 
\Sigma_{\Pi', -} = \Sigma_- \smallsetminus \Sigma_{\Pi', 0}.
\]
\begin{defn}
We define the \emph{Levi subalgebra} $\mf{l}_{\Pi'}$ and
the \emph{horospherical subalgebra} $\mf{u}_{\Pi'}$ of $\g$
associated with $\Pi'$ by
\[
\mf{l}_{\Pi'} = 
\g_0 \oplus \bigoplus_{\lambda \in \Sigma_{\Pi', 0}} \g_{\lambda}, 
\qquad 
\mf{u}_{\Pi'} = 
\bigoplus_{\lambda \in \Sigma_{\Pi', +}} \g_\lambda. \]
\end{defn}

We say that a Lie subalgebra of $\g$ is a \emph{Levi} 
(resp.\ \emph{horospherical}) \emph{subalgebra} when it is conjugate to 
$\mf{l}_{\Pi'}$ (resp.\ $\mf{u}_{\Pi'}$) for some $\Pi' \subset \Pi$. 
A horospherical subalgebra is normalized by 
the corresponding Levi subalgebra. 

Let $\mf{u}$ be a horospherical subalgebra of $\g$. 
The connected subgroup $U$ of $G$ corresponding to $\mf{u}$ 
is called the \emph{horospherical subgroup}. 
It follows from the Iwasawa decomposition that $U$ is closed in $G$ 
and the exponential map $\exp \colon \mf{u} \to U$ is a diffeomorphism. 

\subsection{Reductive symmetric spaces}
\label{subsect:symmetric}

\begin{defn}
\begin{enumerate}[label = (\arabic*)]
\item A subalgebra $\h$ of $\g$ is called \emph{symmetric} if
there exists an involution $\sigma$ of $\g$ such that
$\sigma \theta = \theta \sigma$ and $\h = \g^\sigma$, 
where $\g^\sigma$ is the fixed-point subalgebra of $\sigma$. 
We then say that $(\g, \h)$ is a \emph{reductive symmetric pair}.
\item A closed subgroup $H$ of $G$ is called \emph{symmetric} if 
there exists an involution $\sigma$ of $G$ such that 
$\sigma \theta = \theta \sigma$ and 
$H$ is an open subgroup of the fixed-point group $G^\sigma$.
We then say that $G/H$ is a \emph{reductive symmetric space}.
\end{enumerate}
\end{defn}

\begin{rmk}
The condition $\sigma \theta = \theta \sigma$ 
is a mild assumption; if $\g$ is a semisimple Lie algebra, 
for any involution $\sigma$ of $\g$, 
there exists a Cartan involution $\theta$ of $\g$ such that
$\sigma \theta = \theta \sigma$ (\cite[Lem.~10.2]{Ber57}), 
and the choice of such $\theta$ is unique up to conjugation by 
$\Inn(\h)$ (\cite[Lem.~4]{Mat79}). 
\end{rmk}

\begin{rmk}\label{rmk:involution-determined}
Suppose that $\h$ is a symmetric subalgebra of $\g$ defined by 
an involution $\sigma$. 
Let $\mf{q}$ denote the orthogonal complement of $\h$ 
with respect to $B$. Then, one can easily see that 
$\mf{q}$ is the $(-1)$-eigenspace of $\sigma$, 
and therefore, $\g = \h \oplus \mf{q}$.
This implies that $\sigma$ is uniquely determined by $\h$.
\end{rmk}

\begin{defn}\label{defn:associated}
Let $(\g, \h)$ be a symmetric subalgebra 
determined by an involution $\theta$, and let $\mf{q}$ 
be the orthogonal complement of $\h$ with respect to $B$. 
We define the \emph{associated symmetric subalgebra} 
$\h^a$ of $(\g, \h)$ to be the symmetric subalgebra defined by 
the involution $\sigma\theta$. 
In other words, $\h^a = \mf{k} \cap \h \oplus \p \cap \mf{q}$.
We call $(\g, \h^a)$ the \emph{associated symmetric pair} of $(\g, \h)$.
\end{defn}

\subsection{Some lemmas}

The following lemmas are needed in 
Section~\ref{sect:proof-main-associated}: 

\begin{lem}\label{lem:in-p}
Let $\lambda \in \Sigma$ and $X, X' \in \g_\lambda$. 
\begin{enumerate}[label = (\arabic*)]
\item If $[X, \theta X'] = 0$, then either $X$ or $X'$ is zero. 
\item If $[X, \theta X'] \in \p$, 
then $X$ and $X'$ are linearly dependent.
\end{enumerate}
\end{lem}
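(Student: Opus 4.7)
The plan is to first derive the \emph{key formula}
\begin{equation*}
[Y, \theta Y] = -\|Y\|_\theta^2 \, H_\lambda \qquad (Y \in \g_\lambda),
\end{equation*}
where $H_\lambda \in \mf{a}$ is the unique element with $B(H_\lambda, H) = \lambda(H)$ for all $H \in \mf{a}$. Indeed, $\theta[Y, \theta Y] = [\theta Y, Y] = -[Y, \theta Y]$ shows that this bracket lies in $\g_0 \cap \p = \mf{a}$; evaluating $B([Y, \theta Y], H) = B(Y, [\theta Y, H]) = \lambda(H) B(Y, \theta Y) = -\lambda(H) \|Y\|_\theta^2$ and invoking the non-degeneracy of $B$ on $\mf{a}$ yields the formula. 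Polarizing then gives
\begin{equation*}
[X, \theta X'] + [X', \theta X] = -2 B_\theta(X, X') \, H_\lambda.
\end{equation*}

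For part~(1), I would assume $X \neq 0$ and argue via $\mf{sl}_2(\R)$-representation theory. Setting
\begin{equation*}
E := X, \qquad F := -\frac{2\, \theta X}{\|X\|_\theta^2 \, \|\lambda\|^2}, \qquad H_0 := \frac{2 H_\lambda}{\|\lambda\|^2},
\end{equation*}
the key formula shows that $(E, F, H_0)$ is a standard $\mf{sl}_2(\R)$-triple in $\g$. Regarding $\g$ as a finite-dimensional real $\mf{sl}_2$-module via $\ad$ and decomposing into irreducibles, one invokes the standard fact that the raising operator $E = \ad(X)$ is injective on each $H_0$-weight subspace of strictly negative weight, since no irreducible $\mf{sl}_2(\R)$-module has a highest-weight vector of negative weight. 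Because $H_0$ acts on $\g_{-\lambda}$ as the scalar $-\lambda(H_0) = -2$, elements of $\g_{-\lambda}$ lie in the $(-2)$-weight subspace, and therefore $\ad(X) \colon \g_{-\lambda} \to \g_0$ is injective. Applied to $\theta X'$, this gives $X' = 0$ whenever $[X, \theta X'] = 0$.

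For part~(2), I would reduce to part~(1). Assume $[X, \theta X'] \in \p$; if $X = 0$ the conclusion is trivial, so suppose $X \neq 0$ and decompose $X' = \alpha X + X''$ with $\alpha := B_\theta(X', X) / \|X\|_\theta^2$ and $X''$ orthogonal to $X$ with respect to $B_\theta$. Then
\begin{equation*}
[X, \theta X'] = -\alpha \|X\|_\theta^2 \, H_\lambda + [X, \theta X''].
\end{equation*}
The first summand lies in $\mf{a}$; for the second, pairing with any $H \in \mf{a}$ under $B$ gives $-\lambda(H) B_\theta(X, X'') = 0$, so the $\mf{a}$-component of $[X, \theta X''] \in \g_0 = \mf{m} \oplus \mf{a}$ vanishes. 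Hence $[X, \theta X''] \in \mf{m}$, and since $\mf{m} \cap \p = 0$ together with $[X, \theta X'] \in \p$ forces $[X, \theta X''] = 0$, part~(1) yields $X'' = 0$, so that $X' = \alpha X$.

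The main obstacle is the setup of the $\mf{sl}_2$-triple and the representation-theoretic injectivity statement that underlies part~(1); once those are in place, part~(2) is a direct computation in the $\g_0 = \mf{m} \oplus \mf{a}$ decomposition.
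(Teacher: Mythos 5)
Your proof is correct and follows essentially the same route as the paper: part~(1) uses the identical $\mathfrak{sl}(2,\R)$-triple $(X,\,c\,\theta X,\,c'H_\lambda)$ and the injectivity of the raising operator on negative weight spaces, and part~(2) reduces to part~(1) via the formula $[Y,\theta Y]=-\|Y\|_\theta^2 H_\lambda$. The only cosmetic difference is in part~(2), where you orthogonally split $X'=\alpha X+X''$ and kill the $\mf{m}$-component, whereas the paper computes $[X,\theta X']=-B_\theta(X,X')H_\lambda$ directly and applies part~(1) to $B_\theta(X,X')X-\|X\|_\theta^2X'$; these are the same computation up to rescaling.
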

\begin{proof}
(1) Assume that $X$ is non-zero. 
The linear map
$i \colon \mf{sl}(2, \R) \to \g$ defined by 
\[
i \begin{pmatrix} 0 & 1 \\ 0 & 0 \end{pmatrix} = X, \quad \ 
i \begin{pmatrix} 0 & 0 \\ 1 & 0 \end{pmatrix} = -\frac{2}{\| H_\lambda \|_\theta^2 \| X \|_\theta^2} \theta X, \quad \ 
i \begin{pmatrix} 1 & 0 \\ 0 & -1 \end{pmatrix} = \frac{2}{\| H_\lambda \|_\theta^2} H_\lambda 
\]
is a Lie algebra homomorphism, 
where $H_\lambda$ is the element of $\mf{a}$ corresponding to $\lambda$ 
via $B_\theta$ (see e.g.\ \cite[Prop.~6.52]{Kna02}). Then, 
$\bigoplus_{n \in \Z} \g_{n\lambda}$ 
is an $\mf{sl}(2, \R)$-module via $i$, and 
$\g_{n \lambda}$ is a weight space with weight $2n$ for each $n \in \Z$. 
It follows from the representation theory of $\mf{sl}(2, \R)$ that 
$\ad(X) \colon \g_{-\lambda} \to \g_0$ is injective. 
Therefore, $[X, \theta X'] = 0$ only if $X' = 0$. 

(2) Assume that $X$ is non-zero. 
Notice that $[X, \theta X']$ and $[X, \theta X]$ are elements of 
$\mf{a} \ (= \g_0 \cap \p)$. 
For each $Z \in \mf{a}$, 
\begin{align*}
B_\theta(Z, [X, \theta X']) 
&= B([Z, X], \theta X')
= \lambda(Z) B(X, \theta X') \\
&= B_\theta(Z, -B_\theta(X, X')H_\lambda). 
\end{align*}
Therefore, $[X, \theta X'] = -B_\theta(X, X')H_\lambda$. Similarly, 
$[X, \theta X] = - \| X \|_\theta^2 H_\lambda$. We see that 
\[
[X, \theta(B_\theta(X, X')X - \| X \|_\theta^2 X') ] = 0.
\]
By (1), $B_\theta(X, X')X - \| X \|_\theta^2 X' = 0$. 
\end{proof}

\begin{lem}\label{lem:symmetric-levi}
\begin{enumerate}[label = (\arabic*)]
\item The following conditions on $\Pi'$ are all equivalent: 
\begin{enumerate}[label = (\roman*)]
\item $(\g, \mf{l}_{\Pi'})$ is a reductive symmetric pair.
\item The linear involution $\sigma \colon \g \to \g$ defined by 
\[
\sigma|_{\mf{l}_{\Pi'}} = 1, \qquad 
\sigma|_{\mf{u}_{\Pi'} \oplus \theta \mf{u}_{\Pi'}} = -1
\]
is compatible with the Lie algebra structure, i.e.\ 
$[\sigma X, \sigma Y] = \sigma [X, Y]$ for any $X, Y \in \g$. 
\item $\mf{u}_{\Pi'}$ is abelian.
\item For any $\lambda, \lambda' \in \Sigma_{\Pi', +}$, 
$\lambda + \lambda' \notin \Sigma$. 
\end{enumerate}
\item Suppose that $\g$ is simple, and let $\widetilde{\lambda}$ 
denote the highest root of $\Sigma$ relative to $\Pi$. 
Then, the above conditions (i)--(iv) are also equivalent to: 
\begin{enumerate}
\item[(v)] Either $\Pi' = \Pi$ or there exists $\alpha \in \Pi$ 
satisfying $\Pi' = \Pi \smallsetminus \{ \alpha \}$ and 
$\widetilde{\lambda} - \alpha \in \Sigma_{\Pi', 0}$.
\end{enumerate}
\end{enumerate}
\end{lem}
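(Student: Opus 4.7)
For part~(1), I would establish the cycle (iii)~$\Leftrightarrow$~(iv)~$\Leftrightarrow$~(ii)~$\Leftrightarrow$~(i). The equivalence (iii)~$\Leftrightarrow$~(iv) is immediate from the grading $[\g_\lambda, \g_{\lambda'}] \subset \g_{\lambda + \lambda'}$ and the vanishing $\g_\mu = 0$ for $\mu \notin \Sigma \cup \{0\}$, together with the fact that $\lambda + \lambda' \neq 0$ for $\lambda, \lambda' \in \Sigma_{\Pi', +}$. For (ii)~$\Rightarrow$~(iii), given $X, Y \in \mf{u}_{\Pi'}$ one has $\sigma[X, Y] = [\sigma X, \sigma Y] = [X, Y]$, while $[X, Y] \in \mf{u}_{\Pi'}$ is negated by $\sigma$, forcing $[X, Y] = 0$. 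For (iv)~$\Rightarrow$~(ii), I check bracket compatibility of $\sigma$ on the summands of $\g = \mf{l}_{\Pi'} \oplus \mf{u}_{\Pi'} \oplus \theta \mf{u}_{\Pi'}$; all cases are essentially automatic except $[\mf{u}_{\Pi'}, \theta \mf{u}_{\Pi'}] \subset \mf{l}_{\Pi'}$, which reduces to analyzing $[\g_\lambda, \g_{-\lambda'}] \subset \g_{\lambda - \lambda'}$ for $\lambda, \lambda' \in \Sigma_{\Pi', +}$: condition (iv) rules out $\lambda - \lambda' \in \Sigma_{\Pi',\pm}$ (as $\lambda = \lambda' + (\lambda - \lambda')$ or $\lambda' = \lambda + (\lambda' - \lambda)$ would contradict~(iv)), leaving $\lambda - \lambda' = 0$ or $\lambda - \lambda' \in \Sigma_{\Pi', 0}$, both of which put the bracket in $\mf{l}_{\Pi'}$. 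Finally, (i)~$\Leftrightarrow$~(ii) follows from Remark~\ref{rmk:involution-determined}: the involution defining $\mf{l}_{\Pi'}$ is uniquely determined, with $+1$-eigenspace $\mf{l}_{\Pi'}$ and $-1$-eigenspace its $B$-orthogonal complement, which equals $\mf{u}_{\Pi'} \oplus \theta \mf{u}_{\Pi'}$ by orthogonality of distinct root spaces under $B$.

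For part~(2), assuming $\g$ simple, I would prove (iv)~$\Leftrightarrow$~(v). For (v)~$\Rightarrow$~(iv), the case $\Pi' = \Pi$ is trivial; otherwise $\widetilde{\lambda} - \alpha \in \Sigma_{\Pi', 0}$ forces the $\alpha$-coefficient of $\widetilde{\lambda}$ to equal $1$, and since $\widetilde{\lambda}$ dominates every root, every root has $\alpha$-coefficient at most $1$. Hence any $\lambda, \lambda' \in \Sigma_{\Pi', +}$ have $\alpha$-coefficient exactly $1$, so $\lambda + \lambda'$ has $\alpha$-coefficient $2$ and cannot be a root. For (iv)~$\Rightarrow$~(v) with $\Pi' \neq \Pi$, I first establish $|\Pi \setminus \Pi'| = 1$ via Dynkin-diagram connectedness: for distinct $\alpha, \beta \in \Pi \setminus \Pi'$ at minimal Dynkin distance, the connecting chain $\alpha, \gamma_1, \ldots, \gamma_{k-1}, \beta$ satisfies $\gamma_1, \ldots, \gamma_{k-1} \in \Pi'$, and since sums of simple roots along a connected Dynkin subchain are roots, both $\lambda := \alpha + \gamma_1 + \cdots + \gamma_{k-1} \in \Sigma_{\Pi', +}$ and $\lambda + \beta \in \Sigma$, with $\beta \in \Sigma_{\Pi', +}$, contradicting (iv). Hence $\Pi' = \Pi \setminus \{\alpha\}$, and a minimal-height argument---in which subtracting $\beta \ne \alpha$ from a minimal offending positive root $\nu$ preserves the $\alpha$-coefficient while lowering height, whereas subtracting $\beta = \alpha$ places $\alpha$ and $\nu - \alpha$ both in $\Sigma_{\Pi', +}$, contradicting (iv)---shows every root has $\alpha$-coefficient $\leq 1$. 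Thus $\widetilde{\lambda}$ has $\alpha$-coefficient $1$ and $\widetilde{\lambda} - \alpha$ lies in $\Span_\Z \Pi'$; the $\mf{sl}(2)$-theory for the $\alpha$-string through $\widetilde{\lambda}$ (which terminates above at $\widetilde{\lambda}$) is then meant to yield $\widetilde{\lambda} - \alpha \in \Sigma$.

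The step I expect to be the main obstacle is this final one, upgrading $\widetilde{\lambda} - \alpha \in \Span_\Z \Pi'$ to $\widetilde{\lambda} - \alpha \in \Sigma_{\Pi', 0}$. The $\alpha$-string calculation reduces this to verifying $\langle \widetilde{\lambda}, \alpha \rangle > 0$, which is not immediate from the preceding steps and appears to demand additional structural input about how the highest root pairs with the removed simple root; if an abstract argument proves elusive, a check against the classification of irreducible restricted root systems would serve as a fallback.
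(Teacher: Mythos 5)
Your part~(1) follows the paper's route almost verbatim ((i)~$\Leftrightarrow$~(ii) via the uniqueness of the defining involution, (ii)~$\Rightarrow$~(iii) by the sign computation, (iv)~$\Rightarrow$~(ii) by the same case analysis of $\lambda - \lambda'$), with one gap: the direction (iii)~$\Rightarrow$~(iv) is \emph{not} immediate from the grading. The grading only gives (iv)~$\Rightarrow$~(iii); for the converse you must know that $[\g_\lambda, \g_{\lambda'}] \neq 0$ whenever $\lambda, \lambda', \lambda + \lambda' \in \Sigma$, which for \emph{restricted} root systems (where the $\g_\lambda$ can have dimension $>1$ and the complex root-space arguments do not apply directly) is a genuine theorem --- the paper invokes \cite[Lem.~7.75]{Kna02} exactly here. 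Without this input your cycle through (iii) breaks, since (iii) is otherwise only a target, never a source.

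In part~(2), your (v)~$\Rightarrow$~(iv) is the standard coefficient count and is fine. For (iv)~$\Rightarrow$~(v) you take a different route from the paper: the paper writes $\widetilde{\lambda} = \alpha_1 + \dots + \alpha_n$ with every partial sum a root (Bourbaki) and observes that two indices with $\alpha_k, \alpha_\ell \notin \Pi'$ immediately violate (iv), which delivers both $|\Pi \smallsetminus \Pi'| = 1$ and ``coefficient $1$'' in one stroke; your Dynkin-connectivity plus minimal-height argument reaches the same two conclusions and is correct. Your instinct about the final step is also correct, but the diagnosis should be sharper: the upgrade from $\widetilde{\lambda} - \alpha \in \Span_\Z \Pi'$ to $\widetilde{\lambda} - \alpha \in \Sigma_{\Pi', 0}$ is not merely hard, it is false, and condition (v) as literally written is not implied by (iv). Take $\g = \mf{sl}(4, \R)$, $\Pi' = \{ \alpha_1, \alpha_3 \}$: then $\mf{l}_{\Pi'} = \mf{s}(\mf{gl}(2,\R) \oplus \mf{gl}(2,\R))$ is symmetric and $\mf{u}_{\Pi'}$ is abelian, yet $\widetilde{\lambda} - \alpha_2 = \alpha_1 + \alpha_3$ is not a root, since $\langle \widetilde{\lambda}, \alpha_2 \rangle = 0$. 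The condition in (v) should be read as $\widetilde{\lambda} - \alpha \in \Span_\Z \Pi'$, i.e.\ the coefficient of $\alpha$ in $\widetilde{\lambda}$ equals $1$ --- which is exactly what your argument (and the paper's: its contrapositive only ever contradicts the existence of two indices outside $\Pi'$) actually establishes, and is all that is used elsewhere. So do not search for the missing structural input; state (v) correctly and your proof is complete.
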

\begin{proof}
(1) (ii)~$\Rightarrow$~(i): 
Obvious. 

(i)~$\Rightarrow$~(ii): 
The orthogonal complement of $\mf{l}_{\Pi'}$ in $\g$ 
with respect to $B$ is $\mf{u}_{\Pi'} \oplus \theta(\mf{u}_{\Pi'})$. 
It follows from Remark~\ref{rmk:involution-determined} that $\sigma$ 
is the involution of Lie algebras that defines $\mf{l}_{\Pi'}$. 

(ii)~$\Rightarrow$~(iii): 
Take any $X, Y \in \mf{u}_{\Pi'}$. Since $[X, Y] \in \mf{u}_{\Pi'}$, 
we see that 
\[
[X, Y] = [\sigma X, \sigma Y] = \sigma [X, Y] = -[X, Y], 
\]
i.e.\ $[X, Y] = 0$. Thus, $\mf{u}_{\Pi'}$ is abelian. 

(iii)~$\Rightarrow$~(iv): 
Suppose that (iv) is not satisfied, and take 
$\lambda, \lambda' \in \Sigma_{\Pi', +}$ such that 
$\lambda + \lambda' \in \Sigma$. By \cite[Lem.~7.75]{Kna02}, there exists 
$X \in \g_{\lambda} \ (\subset \mf{u}_{\Pi'})$ and 
$Y \in \g_{\lambda'} \ (\subset \mf{u}_{\Pi'})$ such that $[X, Y] \neq 0$. 
This means that (iii) is not satisfied. 

(iv)~$\Rightarrow$~(ii): 
To see that $\sigma$ is an involution of Lie algebras, 
it is enough to verify that 
\[
[\mf{u}_{\Pi'}, \mf{u}_{\Pi'}] = \{ 0 \}, \qquad 
[\mf{u}_{\Pi'}, \theta \mf{u}_{\Pi'}] \subset \mf{l}_{\Pi'}. 
\]
The first equality directly follows from condition (iv). 
To prove the second equality, it suffices to see that, 
if $\lambda, \lambda' \in \Sigma_{\Pi', +}$, 
then either $\lambda - \lambda' \in \Sigma_{\Pi', 0}$ or 
$\lambda - \lambda' \notin \Sigma$ holds. 
Suppose the contrary, namely, suppose that there exist 
$\lambda, \lambda' \in \Sigma_{\Pi', +}$ such that 
$\lambda - \lambda' \in \Sigma_{\Pi', +} \sqcup \Sigma_{\Pi', -}$. 
If $\lambda - \lambda' \in \Sigma_{\Pi', +}$, then 
$\lambda', \lambda - \lambda' \in \Sigma_{\Pi', +}$ and 
$\lambda' + (\lambda - \lambda') \in \Sigma$. This contradicts to 
condition (iv). 
Similarly, if $\lambda - \lambda' \in \Sigma_{\Pi', -}$, 
then $\lambda, \lambda' - \lambda \in \Sigma_{\Pi', +}$ and 
$\lambda + (\lambda' - \lambda) \in \Sigma$, 
which is a contradiction. 

(2) (v)~$\Rightarrow$~(iv): 
Obvious. 

(iv)~$\Rightarrow$~(v): 
Let us take $\alpha_1, \dots, \alpha_n \in \Pi$ so that 
$\sum_{i=1}^m \alpha_i \in \Sigma$ for every $1 \leqslant m \leqslant n$ 
and $\sum_{i=1}^n \alpha_i = \widetilde{\lambda}$ 
(see e.g.\ \cite[Ch.~VI, \S 1, Prop.~19]{BouLie4-6}). 
Suppose that (v) is not satisfied, and take 
$1 \leqslant k < \ell \leqslant n$ such that 
$\alpha_k, \alpha_\ell \in \Pi'$. 
Then, $\sum_{i=1}^{\ell-1} \alpha_i, \alpha_\ell \in \Sigma_{\Pi', +}$ 
and $(\sum_{i=1}^{\ell-1} \alpha_i) + \alpha_\ell \in \Sigma$, 
i.e.\ (iv) is not satisfied. 
\end{proof}

\section{Preliminaries on proper actions on homogeneous spaces}
\label{sect:preliminaries-proper}

\subsection{The symbols $\prec$, $\sim$, and $\pitchfork$}

\begin{defn}
Let $G$ be a locally compact group. 
Let $H$ and $L$ be two closed subsets of $G$.
\begin{enumerate}[label = (\arabic*)]
\item We say that \emph{$H \prec L$ in $G$} if there exists 
a compact subset $C$ of $G$ such that $H \subset CLC^{-1}$.
If both $H \prec L$ and $L \prec H$ hold in $G$, 
we say that \emph{$H \sim L$ in $G$}.
\item We say that \emph{$H \pitchfork L$ in $G$} if
$CHC^{-1} \cap L$ is compact for every compact subset $C$ of $G$.
Here, $CHC^{-1} = \{ c_1 h c_2^{-1} \mid c_1, c_2 \in C,\ h \in H \}$. 
\end{enumerate}
\end{defn}

\begin{rmk}
The symbols $\sim$ and $\pitchfork$ are due to Kobayashi~\cite{Kob96}. 
Kobayashi used the terminology \emph{`$H \subset L$ modulo $\sim$'} 
for $H \prec L$. Benoist~\cite{Ben96} used the terminologies
\emph{`$H$ is contained in $L$ modulo the compacts of $G$'} and
\emph{`$(H, L)$ is $G$-proper'} for $H \prec L$ and $H \pitchfork L$, 
respectively.
\end{rmk}

One can easily see that: 

\begin{fact}[cf.\ {\cite[\S 3.1]{Ben96}, \cite[\S 2]{Kob96}}]
\label{fact:proper-action-general}
Let $G$ be a locally compact group.
\begin{enumerate}[label = (\arabic*)]
\item $\prec$ is a preorder 
(and therefore, $\sim$ is an equivalence relation). 
\item $\pitchfork$ is a symmetric relation. 
\item Let $H$ and $L$ be two closed subgroups of $G$.
Then, $L$ acts properly on $G/H$ if and only if
$H \pitchfork L$ in $G$. 
\item Let $H, H'$, and $L$ be three closed subsets of $G$.
If $H' \prec H$ and $H \pitchfork L$ in $G$, 
then $H' \pitchfork L$ in $G$.
\item Let $\widetilde{G}$ be a locally compact group containing $G$ 
as a closed subgroup. 
Let $H$ and $H'$ be two closed subsets of $G$. 
Let $L$ be a closed subset of $\widetilde{G}$ such that 
$G \cap L = \{ 1 \}$ and $L \subset Z_{\widetilde{G}}(G)$. 
If $H' \prec H$ in $G$, 
then $H' \times L \prec H \times L$ in $\widetilde{G}$. 
\item Suppose that $G$ is a reductive Lie group and let 
$\mu \colon G \to \mf{a}^+$ be the Cartan projection. 
Let $H$ be a closed subset of $G$. 
Then, $H \sim \exp(\mu(H) + \mathcal{C})$ in $G$ 
for any compact subset $\mathcal{C}$ of $\mf{a}$. 
\end{enumerate}
\end{fact}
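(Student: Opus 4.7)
The plan is to verify each of the six statements directly from the definitions; all but (6) are essentially formal, and only (6) requires genuine structural input about reductive groups.

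For (1), I take $C = \{ 1 \}$ for reflexivity and compose compact conjugators for transitivity. For (2), I exploit the symmetry of the definition under replacing $C$ by $C^{-1}$: this converts compactness of $C H C^{-1} \cap L$ into compactness of $C L C^{-1} \cap H$ via the continuous conjugation action. For (3), I unwind the definition of proper action: the $L$-action on $G/H$ is proper iff $\{ l \in L : l \Omega \cap \Omega \neq \emptyset \}$ is relatively compact for every compact $\Omega \subset G/H$, and lifting $\Omega$ to a compact $C \subset G$ via the quotient map rewrites this as relative compactness of $L \cap C H C^{-1}$, which is exactly $H \pitchfork L$.

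For (4), if $H' \subset C_0 H C_0^{-1}$ then for any compact $C$ the set $C H' C^{-1} \cap L$ is closed in $L$ and contained in the compact set $(C C_0) H (C C_0)^{-1} \cap L$, hence compact. For (5), since $L \subset Z_{\widetilde{G}}(G)$, every element of $L$ commutes with the conjugating set $C_0 \subset G$, giving $H' L \subset (C_0 H C_0^{-1}) L = C_0 (H L) C_0^{-1}$; viewing $C_0$ inside $\widetilde{G}$ then witnesses $H' L \prec H L$.

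The only substantive part is (6), for which I would rely on the $KAK$-decomposition. Set $E = \exp(\mu(H))$, $E' = \exp(\mu(H) + \mathcal{C})$, and $D = \exp(\mathcal{C})$. The $KAK$-decomposition directly gives $H \subset K E K$, and reversing it for each $X \in \mu(H)$ yields $E \subset K H K$; since $K = K^{-1}$, both inclusions produce $\prec$-relations with conjugator $K$, whence $H \sim E$. To upgrade to $E'$, I would exploit that $\mf{a}$ is abelian, so $E' = E D = D E$ with $D$ commuting elementwise with $E$: fixing any $Y_0 \in \mathcal{C}$, the two-point compact set $\{ 1, \exp(-Y_0) \}$ realizes $E \prec E'$ (since $\exp(-Y_0) E' \supset E$), and the compact set $\{ 1 \} \cup D$ realizes $E' \prec E$ (since $D E \supset E D = E'$). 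The one mild bookkeeping obstacle is this last passage from the one-sided multiplications naturally produced by commutativity in $\exp(\mf{a})$ to the conjugations $C (-) C^{-1}$ demanded by the definition of $\prec$; the above two-point compact sets are chosen precisely to bridge this gap.
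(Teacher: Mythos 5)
Your proof is correct. The paper itself gives no argument for this Fact (it is stated with ``one can easily see that'' and deferred to Benoist and Kobayashi), and your verifications are exactly the standard ones: parts (1)--(5) are the formal manipulations of triple product sets $CHC^{-1}$ (noting, as you implicitly do, that sets like $CLC^{-1}\cap H$ are closed, so containment in a compact set suffices), and part (6) correctly combines $H\sim\exp(\mu(H))$ from the $KAK$ decomposition with the translation trick in the abelian group $\exp(\mf{a})$ to absorb the compact perturbation $\mathcal{C}$.
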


Let $G$ be a reductive Lie group. 
Let $H$ and $L$ be two closed subgroups of $G$. 
Then, it follows from 
Fact~\ref{fact:proper-action-general}~(3), (4), and (6) that 
the properness of the action of $L$ on $G/H$ is completely determined by 
the Cartan projections $\mu(H)$ and $\mu(L)$. 

\begin{rmk}
In fact, the following more precise result is known, 
although we do not use it in this paper: 
\end{rmk}

\begin{fact}[{\cite[Th.~5.1]{Ben96}, \cite[Th.~3.4]{Kob96}}]
Let $G$ be a reductive Lie group and 
$\mu \colon G \to \mf{a}^+$ be the Cartan projection. 
Let $H$ and $L$ be two closed subsets of $G$. Then, 
$H \pitchfork L$ holds if and only if 
$(\mu(H) + \mathcal{C}) \cap \mu(L)$ 
is compact for any compact subset $\mathcal{C}$ of $\mf{a}$. 
\end{fact}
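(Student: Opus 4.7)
The plan is to deduce both directions from two ingredients: Fact~\ref{fact:proper-action-general}~(6), which states that $H \sim \exp(\mu(H) + \mathcal{C})$ for any compact $\mathcal{C} \subset \mf{a}$, together with a Lipschitz-type estimate for the Cartan projection, namely that for every compact $C \subset G$ there exists a compact $\mathcal{C}_0 \subset \mf{a}$ such that $\mu(c_1 g c_2) \in \mu(g) + \mathcal{C}_0$ for all $g \in G$ and all $c_1, c_2 \in C$.

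For the ``if'' direction, assume that $(\mu(H) + \mathcal{C}) \cap \mu(L)$ is compact for every compact $\mathcal{C} \subset \mf{a}$. Given a compact $C \subset G$, the Lipschitz estimate supplies a compact $\mathcal{C}_0 \subset \mf{a}$ with $\mu(C H C^{-1}) \subset \mu(H) + \mathcal{C}_0$, so
\[
\mu(CHC^{-1} \cap L) \subset (\mu(H) + \mathcal{C}_0) \cap \mu(L),
\]
which is compact by hypothesis. Since $\mu$ is proper and $CHC^{-1} \cap L$ is closed in $G$, it is itself compact, yielding $H \pitchfork L$.

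For the ``only if'' direction, fix a compact $\mathcal{C} \subset \mf{a}$ and observe that $\pitchfork$ is preserved under $\sim$ on either side: from $H \pitchfork L$ together with $\exp(\mu(H)+\mathcal{C}) \prec H$ (part of Fact~\ref{fact:proper-action-general}~(6)), Fact~\ref{fact:proper-action-general}~(4) gives $\exp(\mu(H)+\mathcal{C}) \pitchfork L$; applying symmetry (Fact~\ref{fact:proper-action-general}~(2)) and the same trick with $\exp(\mu(L)) \prec L$ then yields $\exp(\mu(H)+\mathcal{C}) \pitchfork \exp(\mu(L))$. Specializing the defining condition of $\pitchfork$ to the compact set $C = \{1\}$ shows that $\exp(\mu(H) + \mathcal{C}) \cap \exp(\mu(L))$ is compact in $G$. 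Since $\exp \colon \mf{a} \to A$ is a diffeomorphism, this transfers to the compactness of $(\mu(H) + \mathcal{C}) \cap \mu(L)$ in $\mf{a}$.

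The main obstacle is establishing the Lipschitz estimate for the Cartan projection, which is the only piece not already supplied by the preliminaries. A standard route is to fix a faithful finite-dimensional representation $\rho$ of $G$, compare $\|\mu(g)\|_\theta$ with $\log \|\rho(g)\|_{\mathrm{op}}$ up to a uniform additive constant, and use the submultiplicativity of the operator norm to bound $\bigl|\log\|\rho(c_1 g c_2)\|_{\mathrm{op}} - \log \|\rho(g)\|_{\mathrm{op}}\bigr|$ uniformly in $(c_1, c_2) \in C \times C$. Repeating this for a family of representations whose highest weights separate the simple roots (or, equivalently, applying the estimate to each fundamental weight) upgrades the norm bound to the required shift by a compact set in $\mf{a}$.
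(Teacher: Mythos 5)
The paper does not actually prove this Fact: it is quoted from Benoist and Kobayashi precisely because it is ``known,'' and the author explicitly says it is not used in the paper. So there is no in-paper proof to compare against; your argument has to stand on its own, and it essentially reproduces the standard proof from the cited references. Your ``only if'' direction is complete and uses nothing beyond Fact~\ref{fact:proper-action-general}~(2), (4), (6) and the injectivity of $\exp$ on $\mf{a}$; the one minor point worth saying out loud is that $\exp(\mu(H)+\mathcal{C})$ and $\exp(\mu(L))$ are closed subsets of $G$ (images of closed subsets of $\mf{a}$ inside the closed subgroup $A$), so the relations $\prec$ and $\pitchfork$ apply to them. Your ``if'' direction correctly isolates the genuine content, namely the uniform estimate $\mu(CgC) \subset \mu(g) + \mathcal{C}_0$; this is exactly Kobayashi's \cite[Lem.~3.6]{Kob96} (and the analogous lemma in \cite{Ben96}), and your representation-theoretic sketch of it is the standard one. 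That lemma is the only piece you leave unproved in detail; since it is a citable result and your reduction to it is clean (including the observation that $CHC^{-1}\cap L$ is closed and that properness of $\mu$ then upgrades the bound on $\mu(CHC^{-1}\cap L)$ to compactness of the set itself), I consider the proposal correct as a proof modulo that standard lemma.
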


\subsection{Cohomological dimensions and Clifford--Klein forms}

Recall that any Lie group with finitely many connected components
has a maximal compact subgroup unique up to conjugation
(see e.g.\ \cite[Ch.~XV, Th.~3.1]{Hoc65}, 
\cite[Ch.~VII, Th.~1.2]{Bor98}).
The following definition is from Kobayashi~\cite{Kob89}: 

\begin{defn}[{cf.\ \cite[\S 2]{Kob89}}]
For a Lie group $G$ with finitely many connected components, we put
$d(G) = \dim G - \dim K$ and call it the \emph{non-compact dimension} 
of $G$, where $K$ is a maximal compact subgroup of $G$. 
\end{defn}

\begin{defn}
For a discrete group $\Gamma$ and a commutative ring $A$, 
we define the \emph{$A$-cohomological dimension} 
$\cd_A \Gamma \in \N \sqcup \{ +\infty \}$ by 
\[
\cd_A \Gamma = \sup \{ n \in \N \mid 
\text{$H^n(\Gamma; V) \neq 0$ for some $A\Gamma$-module $V$} \}.
\]
\end{defn}

\begin{fact}
[cf.\ {\cite[Cor.~5.5]{Kob89}}, {\cite[Lem.~2.2]{Mor17Selecta}}]
\label{fact:cohomological-dimension}
Let $G$ be a Lie group and $H$ be a closed subgroup of $G$. 
Assume that $G$ and $H$ have finitely many connected components.
Let $\Gamma$ be a discrete subgroup of $G$ 
acting properly and freely on $G/H$ and 
$k$ be a field of characteristic zero. 
Then, the inequality 
\[
\cd_k \Gamma \leqslant d(G) - d(H)
\]
holds; the equality is attained 
if and only if the Clifford--Klein form $\Gamma \bs G/H$ is compact. 
\end{fact}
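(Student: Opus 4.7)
The plan is to relate $\cd_k \Gamma$ to the cohomology of the Clifford--Klein form $\Gamma \backslash G/H$ via a Leray--Serre spectral sequence with compact fibre, and to deduce the dimension bound from Poincar\'e duality on the (possibly noncompact) manifold $\Gamma \backslash G/H$. By Selberg's lemma (finitely generated linear groups are virtually torsion-free) together with the fact that $\cd_k$ is preserved under passing to a finite-index subgroup when $\mathrm{char}\, k = 0$, I may assume $\Gamma$ is torsion-free. Choose maximal compact subgroups $K \subset G$ and $K_H \subset H$ with $K_H \subset K$. Since $H / K_H \cong \R^{d(H)}$ is contractible, the fibre bundle $\Gamma \backslash G / K_H \to \Gamma \backslash G / H$ is a homotopy equivalence, while the fibre bundle $\Gamma \backslash G / K_H \to \Gamma \backslash G / K$ has compact fibre $K/K_H$ of dimension $r := (\dim G - \dim H) - (d(G) - d(H))$. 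Since $G/K$ is contractible and $\Gamma$ is torsion-free, $\Gamma \backslash G / K$ is a $K(\Gamma, 1)$-space of dimension $d(G)$.

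The Leray--Serre spectral sequence of $K / K_H \to \Gamma \backslash G / K_H \to \Gamma \backslash G / K$ then reads
\[
E_2^{p,q} = H^p(\Gamma; H^q(K/K_H; k)) \Longrightarrow H^{p+q}(\Gamma \backslash G / H; k),
\]
with $H^q(K/K_H; k) = 0$ for $q > r$ and $\dim \Gamma \backslash G/H = (d(G) - d(H)) + r$. If $\Gamma \backslash G / H$ is compact, Poincar\'e duality ensures $H^{\dim G - \dim H}(\Gamma \backslash G / H; L) \neq 0$ for an appropriate local system $L$ (the orientation sheaf); this forces $E_\infty^{p,q} \neq 0$ for some $p + q = \dim G - \dim H$, and $q \leqslant r$ gives $p \geqslant d(G) - d(H)$, hence $\cd_k \Gamma \geqslant d(G) - d(H)$. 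Conversely, if $\Gamma \backslash G / H$ is noncompact, the vanishing of its top-degree cohomology combined with the same spectral-sequence bookkeeping (applied with a suitably twisted coefficient system so that the fibre contributes its Poincar\'e-duality class) yields $\cd_k \Gamma < d(G) - d(H)$, giving the equivalence ``equality $\Leftrightarrow$ compact''.

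For the upper bound $\cd_k \Gamma \leqslant d(G) - d(H)$ (the main obstacle of the proof), the naive bound from the $K(\Gamma, 1)$-space $\Gamma \backslash G / K$ gives only $d(G)$; the sharpening by $d(H)$ exploits the properness of the $H$-action on $\Gamma \backslash G$. One argues via Poincar\'e--Lefschetz duality on the noncompact manifold $\Gamma \backslash G/H$, or equivalently via $H$-equivariant cohomology of $\Gamma \backslash G$ (which is identified with $H^\ast(\Gamma \backslash G / H)$): the ``fundamental class along the $H$-direction'' shifts cohomological degree by $d(H)$ and forces $H^p(\Gamma; V) = 0$ for every $p > d(G) - d(H)$ and every $k\Gamma$-module $V$. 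This refinement, which is what makes the $H$-noncompactness actually reduce the cohomological dimension of $\Gamma$, is the technical core of \cite[Cor.~5.5]{Kob89} (see also \cite[Lem.~2.2]{Mor17Selecta}).
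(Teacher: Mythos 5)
The paper does not actually prove this statement: it is quoted as a \emph{Fact} from \cite{Kob89} and \cite{Mor17Selecta}, so your proposal can only be judged against the standard argument given there, which is indeed the fibration argument you set up. Your skeleton is the right one: the homotopy equivalence $\Gamma \bs G/K_H \to \Gamma \bs G/H$ coming from the contractible fibre $H/K_H \cong \R^{d(H)}$, the compact fibre $K/K_H$ of dimension $r = (\dim G - \dim H) - (d(G)-d(H))$, and the Poincar\'e-duality argument for ``compact $\Rightarrow$ $\cd_k \Gamma \geqslant d(G)-d(H)$'' are all correct. But there are two genuine gaps. First, the reduction to torsion-free $\Gamma$ is unavailable: $G$ is not assumed linear and $\Gamma$ is not assumed finitely generated, so Selberg's lemma does not apply here (unlike in Lemma~\ref{lem:Kob92-refine}, where $G$ is linear and $\Gamma$ is cocompact, hence finitely generated). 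The fix is that you do not need it: since $K_H \subset H$ and $K_H$ is compact, the hypothesis that $\Gamma$ acts properly and freely on $G/H$ already forces $\Gamma$ to act properly and freely on $G/K_H$; one should therefore replace the Serre spectral sequence over the possibly singular base $\Gamma \bs G/K$ by the Cartan--Leray spectral sequence $H^p(\Gamma; H^q(G/K_H; V)) \Rightarrow H^{p+q}(\Gamma \bs G/K_H; \mathcal{V})$ of this free proper action, with $G/K_H \simeq K/K_H$ playing the role of the fibre.

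Second, and more seriously, you never prove the inequality $\cd_k \Gamma \leqslant d(G)-d(H)$ or the implication ``noncompact $\Rightarrow$ strict inequality'': your final paragraph describes them as ``the technical core of \cite[Cor.~5.5]{Kob89}'', i.e.\ you cite the statement you are supposed to prove. The missing step is a surviving-corner argument, which uses nothing beyond what you already have on the table. A crude bound ($\cd_k \Gamma \leqslant d(G)$, from the base) gives finiteness; set $p = \cd_k \Gamma$ and choose $V$ with $H^p(\Gamma; V) \neq 0$. In the spectral sequence with coefficients twisted by the fibrewise orientation sheaf, the corner term $E_2^{p,r} \cong H^p(\Gamma; V)$ receives differentials only from terms with fibre degree $> r$ (which vanish) and maps only to terms with base degree $> \cd_k \Gamma$ (which vanish), so $E_\infty^{p,r} = E_2^{p,r} \neq 0$ is a subquotient of $H^{p+r}(\Gamma \bs G/H; \cdot)$. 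Since the latter vanishes for $p+r > \dim G - \dim H$, i.e.\ $p > d(G)-d(H)$, the inequality follows; since it also vanishes for $p+r = \dim G - \dim H$ when the quotient is noncompact, strictness follows. Note that the orientation twist you mention in passing is essential, not cosmetic: in characteristic zero, $H^r(K/K_H; k) = 0$ when $K/K_H$ is non-orientable, and without the twist the corner term can vanish identically and the whole argument collapses.
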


\subsection{An obstruction to the existence of 
compact Clifford--Klein forms}

Later, we use the following result: 

\begin{prop}[{\cite[Prop.~4.1]{Mor17PRIMS}}]
\label{prop:trace-free}
Let $G$ be a Lie group and 
$H$ be a closed subgroup of $G$ with finitely many connected components. 
Let $\mf{n}_\g(\h)$ denote the normalizer of $\h$ in $\g$. 
If the $\h$-action on $\g/\h$ is trace-free 
(i.e.\ $\trace(\ad_{\g/\h}(X)) = 0$ for all $X \in \h$) 
and the $\mf{n}_\g(\h)$-action on $\g/\h$ is not trace-free, 
then $G/H$ does not admit a compact Clifford--Klein form. 
\end{prop}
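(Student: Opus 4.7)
The plan is to combine the two hypotheses into a volume-scaling self-diffeomorphism of $G/H$ that descends to any compact Clifford--Klein form, and then to invoke the familiar rigidity fact that no compact manifold admits a self-diffeomorphism scaling a volume form by a constant different from $\pm 1$. First I would reduce to the case that $H$ is connected: since $H$ has finitely many components, the map $\Gamma \bs G/H^0 \to \Gamma \bs G/H$ is a finite covering on which $\Gamma$ still acts properly and freely, so compactness passes to $\Gamma \bs G/H^0$. Replacing $H$ by $H^0$ affects neither $\h$ nor $\mf{n}_\g(\h)$, and after this reduction one has the useful equality $N_G(H) = N_G(\h)$.

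Next, I would use the trace-freeness of the $\h$-action to produce a $G$-invariant nowhere-vanishing top form $\omega$ on $G/H$: since $H$ is connected, $\det(\Ad(h)|_{\g/\h}) = 1$ for all $h \in H$, so the top exterior power of $T(G/H) \cong G \times_H (\g/\h)$ is the trivial $G$-equivariant line bundle, which yields such an $\omega$. By the failure of trace-freeness on $\mf{n}_\g(\h)$, I pick $X \in \mf{n}_\g(\h)$ with $\trace \ad_{\g/\h}(X) \neq 0$ and set $n_0 = \exp(X) \in N_G(\h) = N_G(H)$. The right translation $R_{n_0} \colon G/H \to G/H$, $gH \mapsto g n_0 H$, is then a well-defined diffeomorphism commuting with the left $G$-action; computing its differential via the identification $T_{gH}(G/H) \cong \g/\h$ by left translation shows that it acts as $\Ad(n_0^{-1})$ on $\g/\h$, whence
\[
R_{n_0}^* \omega = c\,\omega, \qquad c = \det\bigl(\Ad(n_0)|_{\g/\h}\bigr)^{-1} = \exp\bigl(-\trace \ad_{\g/\h}(X)\bigr) \in \R_{>0} \smallsetminus \{1\}.
\]

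Finally, assuming $\Gamma \bs G/H$ is a compact Clifford--Klein form, $R_{n_0}$ descends to a diffeomorphism $\bar R_{n_0}$ of the compact manifold $\Gamma \bs G/H$ (since it commutes with the left $\Gamma$-action), and the $\Gamma$-invariant $\omega$ descends to a nowhere-vanishing top form $\bar\omega$ satisfying $\bar R_{n_0}^* \bar\omega = c\,\bar\omega$. Integration over the compact manifold together with change of variables gives $c \int \bar\omega = \pm \int \bar\omega$, so $c = \pm 1$; but $c > 0$ and $c \neq 1$, a contradiction.

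The main obstacle I anticipate is not the final integration step, which is standard, but the setup: correctly pinning down the $G$-equivariant structure of the top form, verifying that the reduction to connected $H$ is truly harmless (compactness, properness, and freeness all persist along the finite covering), and keeping the conventions (pullback versus pushforward, the inverse in $\Ad(n_0^{-1})$) consistent so that the sign of $\trace \ad_{\g/\h}(X)$ indeed produces a positive scaling factor different from $1$.
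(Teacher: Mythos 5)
Your proposal is correct and is essentially the paper's own argument (the alternative proof given in Appendix~\ref{append:trace-free}): reduce to connected $H$, build the $G$-invariant volume form from trace-freeness of the $\h$-action, and use right translation by an element of $\exp(\mf{n}_\g(\h))$ that scales the descended volume form by a constant $\neq 1$, contradicting compactness. The only cosmetic difference is that the paper runs the whole one-parameter flow $\varphi_t$ and concludes $\Vol(\Gamma\bs G/H)=+\infty$, whereas you use a single diffeomorphism and the change-of-variables identity on a compact manifold.
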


In a previous paper \cite{Mor17PRIMS}, we gave a proof of 
Proposition~\ref{prop:trace-free} based on the comparison of 
relative Lie algebra cohomology and de Rham cohomology. 
Here, we give another proof following the idea of 
Zimmer~\cite[Prop.~2.1]{Zim94}: 

\begin{proof}[Alternative proof of Proposition~\ref{prop:trace-free}]
Let $\Gamma \bs G/H$ be a Clifford--Klein form. 
Replacing $H$ with its identity component, 
we may assume that $H$ is connected. 
Let $\Omega^N(G/H)^G$ be the space of all $G$-invariant $N$-forms on $G/H$, where 
$N = \dim (G/H)$. 
Since the canonical map
\[
\Omega^N(G/H)^G \to (\Lambda^N (\g/\h))^\h, \qquad \omega \mapsto \omega_{1 \cdot H}
\]
is a linear isomorphism and since the $\h$-action on $\g/\h$ is trace-free, 
there is a non-trivial $G$-invariant volume form $\omega$ on $G/H$. 
Let $\omega_\Gamma$ be the associated volume form on $\Gamma \bs G/H$. 
We write $\Vol(\Gamma \bs G/H) \in (0, +\infty]$ 
for the volume of $\Gamma \bs G/H$ with respect to $\omega_\Gamma$. 
Take $X_0 \in \mf{n}_\g(\h)$ such that $\trace(\ad_{\g/\h}(X_0)) = 1$. 
For each $t \in \R$,  
\[
\varphi_t \colon \Gamma \bs G/H \to \Gamma \bs G/H, \qquad 
\Gamma gH \mapsto \Gamma g \exp(-tX_0) H
\]
is a well-defined diffeomorphism because $X_0$ belongs to the normalizer of $\h$. 
We have $\varphi_t^\ast \omega_\Gamma = e^t \omega_\Gamma$ and, therefore, 
\[
\Vol(\Gamma \bs G/H) = \int_{\Gamma \bs G/H} \omega_\Gamma 
= \int_{\Gamma \bs G/H} \varphi_t^\ast \omega_\Gamma 
= e^t \Vol(\Gamma \bs G/H). 
\]
This implies $\Vol(\Gamma \bs G/H) = +\infty$, 
hence $\Gamma \bs G/H$ cannot be compact. 
\end{proof}

\begin{rmk}
The existence problem of compact Clifford--Klein forms (Problem~\ref{prob:existence}) may be generalized in the following two ways: 
\begin{itemize}
\item Is there a (possibly non-compact) finite-volume Clifford--Klein form of $G/H$?
\item Is there a compact manifold locally modelled on $G/H$?
\end{itemize}
Here, by a \emph{manifold locally modelled on $G/H$}, we mean a manifold obtained by patching open subsets of $G/H$ by left translations by elements of $G$. A manifold locally modelled on $G/H$ is more general than Clifford--Klein forms. 

It follows from the above proof that, 
under the assumptions of Proposition~\ref{prop:trace-free}, 
there is no (compact or non-compact) finite-volume Clifford--Klein form of $G/H$. 
However, it cannot be used to show the non-existence of compact manifold locally modelled on $G/H$. 
In contrast, the proof given in \cite{Mor17PRIMS}
shows the non-existence of compact manifolds locally modelled on $G/H$, while it cannot be used to show the non-existence of non-compact finite-volume Clifford--Klein form. 
\end{rmk}

\section{Proof of Proposition~\ref{prop:Kob92-2}}
\label{sect:proof-Kob92-2}

By Fact~\ref{fact:proper-action-general}~(1) and (6), to prove 
Proposition~\ref{prop:Kob92-2}, it is enough to see the following: 

\begin{lem}\label{lem:Kob92-refine}
Let $G$ be a linear Lie group and $H$ be a closed subgroup of $G$. 
Assume that $G$ and $H$ have finitely many connected components. 
\begin{enumerate}[label = (\arabic*)]
\item Suppose that there exists a closed subgroup $H'$ of $G$ 
with finitely many connected components such that
\begin{itemize}
\item $H' \prec H$ in $G$.
\item $d(H') > d(H)$.
\end{itemize}
Then, $G/H$ does not admit a compact Clifford--Klein form.
\item Suppose that there exists a closed subgroup $H'$ of $G$ 
with finitely many connected components such that
\begin{itemize}
\item $H' \prec H$ in $G$.
\item $d(H') = d(H)$.
\item $G/H'$ does not admit a compact Clifford--Klein form.
\end{itemize}
Then, $G/H$ does not admit a compact Clifford--Klein form.
\end{enumerate}
\end{lem}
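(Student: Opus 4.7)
The plan is to argue by contradiction along the standard Kobayashi-type path, using cohomological dimension as the main invariant. Suppose $\Gamma \bs G/H$ were a compact Clifford--Klein form for some discrete $\Gamma \subset G$ acting properly and freely. Since $G$ is linear, I would first invoke Selberg's lemma to replace $\Gamma$ by a torsion-free subgroup of finite index, which still produces a compact Clifford--Klein form of $G/H$; this is where the linearity hypothesis is used. Call this torsion-free subgroup $\Gamma$ again.

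Next I would transfer properness from $H$ to $H'$. Since $\Gamma$ acts properly on $G/H$, Fact~\ref{fact:proper-action-general}~(3) gives $\Gamma \pitchfork H$ in $G$. The hypothesis $H' \prec H$ together with Fact~\ref{fact:proper-action-general}~(4) then yields $\Gamma \pitchfork H'$ in $G$, i.e.\ $\Gamma$ acts properly on $G/H'$ by Fact~\ref{fact:proper-action-general}~(3) again. Because $\Gamma$ is discrete and acts properly, all stabilizers for the action on $G/H'$ are finite; since $\Gamma$ is torsion-free, these stabilizers are trivial, so the $\Gamma$-action on $G/H'$ is in fact free.

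Now I would apply Fact~\ref{fact:cohomological-dimension} twice. From the compactness of $\Gamma \bs G/H$ we get
\[
\cd_k \Gamma = d(G) - d(H),
\]
while the proper free action on $G/H'$ yields
\[
\cd_k \Gamma \leqslant d(G) - d(H').
\]
In case~(1), combining these gives $d(H') \leqslant d(H)$, contradicting $d(H') > d(H)$. In case~(2), the two relations force the equality $\cd_k \Gamma = d(G) - d(H')$, so the second part of Fact~\ref{fact:cohomological-dimension} implies that $\Gamma \bs G/H'$ is a compact Clifford--Klein form of $G/H'$, contradicting the assumption that $G/H'$ admits none.

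The only nontrivial point, and the one step worth flagging, is the passage to a torsion-free subgroup so as to guarantee freeness of the induced $\Gamma$-action on $G/H'$ (freeness on $G/H$ alone does not formally give freeness on $G/H'$); this is why one must assume $G$ linear and invoke Selberg's lemma. Once freeness is secured, the rest is a direct numerical comparison via Fact~\ref{fact:cohomological-dimension}.
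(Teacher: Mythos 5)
Your argument is correct and coincides with the paper's own proof: Selberg's lemma to make $\Gamma$ torsion-free, transfer of properness from $H$ to $H'$ via Fact~\ref{fact:proper-action-general}~(3) and (4), freeness from torsion-freeness, and the two applications of Fact~\ref{fact:cohomological-dimension} to compare $d(H)$ and $d(H')$. The only cosmetic difference is that the paper writes out case (2) and notes that (1) is analogous, whereas you treat both; the logic is identical.
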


As we mentioned in Introduction, 
this lemma is proved by basically the same argument as 
Kobayashi's result~\cite[Th.~1.5]{Kob92}. 

\begin{proof}[Proof of Lemma~\ref{lem:Kob92-refine}]
We prove (2) only; the proof of (1) is essentially the same. 
Suppose that $\Gamma \bs G/H$ is a compact Clifford--Klein form. 
By Selberg's lemma~\cite[Lem.~8]{Sel60},
we may assume without loss of generality that $\Gamma$ is torsion-free.
By Fact~\ref{fact:cohomological-dimension}, we have 
$\cd_k \Gamma = d(G) - d(H)$.

On the other hand, by Fact~\ref{fact:proper-action-general}~(3) and (4),
$\Gamma$ acts properly on $G/H'$. 
Furthermore, this action is free since $\Gamma$ is torsion-free. 
By assumption, the Clifford--Klein form $\Gamma \bs G/H'$ is non-compact, 
hence Fact~\ref{fact:cohomological-dimension} implies that 
$\cd_k \Gamma < d(G) - d(H')$. 
This is impossible because $d(H) = d(H')$.
\end{proof}

\section{Proof of Theorem~\ref{thm:main-sl}}
\label{sect:proof-main-sl}

\begin{proof}[Proof of Theorem~\ref{thm:main-sl}]
We write $G/H = \SL(n, \K) / \SL(m, \K)$ and put 
$k = \lfloor m/2 \rfloor$. 

Define the closed subgroup $H'$ of $G$ as follows: 
\[
H' = 
\left\{ \begin{pmatrix} g & X \\ 0 & I_{n-k} \end{pmatrix}
\ \middle| \ g \in \SL(k, \K), X \in \M(k, n-k; \K) \right\}.
\]
By Proposition~\ref{prop:Kob92-2}~(1) and (2), 
it suffices to verify that the following three conditions are satisfied: 
\begin{enumerate}[label = (\alph*)]
\item $\mu(H') \subset \mu(H)$. 
\item $d(H') \geqslant d(H)$. 
\item $G/H'$ does not admit a compact Clifford--Klein form.
\end{enumerate}

Let us verify condition (b). 
For simplicity, we explain the real case only
(the proof for the complex and the quaternionic cases are similar). 
The maximal compact subgroups of $H$ and $H'$ are respectively 
$\SO(m)$ and $\SO(k)$, hence
\[
d(H) = \frac{m(m+1)}{2} - 1, \qquad 
d(H') = \frac{k(k+1)}{2} - 1 + k(n-k). 
\]
When $m \geqslant 2$ is even, we have $k = m/2$, and the inequality 
$d(H') \geqslant d(H)$ is rewritten as 
\[
\frac{m(m+2)}{8} - 1 + \frac{m}{2}\left( n- \frac{m}{2} \right) \geqslant \frac{m(m+1)}{2} - 1. 
\]
This is equivalent to
$n \geqslant 5m/4 + 1/2$. 
Since $n \in \N$, we can rephrase it as 
\[
n \geqslant 
\begin{cases}
5m/4 + 1 & \text{if $m \equiv 0 \pmod 4$}, \\
5m/4 + 1/2 & \text{if $m \equiv 2 \pmod 4$}. 
\end{cases}
\]
When $m \geqslant 3$ is odd, we have $k = (m-1)/2$, and the inequality 
$d(H') \geqslant d(H)$ is rewritten as 
\[
\frac{(m-1)(m+1)}{8} - 1 + \frac{m-1}{2}\left( n- \frac{m-1}{2} \right) \geqslant \frac{m(m+1)}{2} - 1. 
\]
This is equivalent to 
\[
n \geqslant \frac{5m}{4} + \frac{5}{4} + \frac{2}{m-1}. 
\]
Since $n \in \N$, we can rephrase it as 
\[
n \geqslant 
\begin{cases}
5m/4 + 7/4 & \text{if $m \equiv 0 \pmod 4$}, \\
5m/4 + 9/4 & \text{if $m \equiv 2 \pmod 4$}. 
\end{cases}
\]
Let us see that condition (c) follows from Proposition~\ref{prop:trace-free}. 
Since $G$ is unimodular (i.e.\ admits a bi-invariant volume form), 
we have 
\[
(\Lambda^N \g)^\g \cong \Omega^N (G)^{G \times G} \neq 0 \qquad (N = \dim G), 
\]
hence the $\g$-action on $\g$ is trace-free. 
Similarly, since $H'$ is unimodular, the $\h'$-action on $\h'$ is trace-free. 
Thus, the $\h'$-action on $\g/\h'$ is trace-free. Meanwhile, 
\[
X_0 = \begin{pmatrix} (n-k) I_k & 0 \\ 0 & -k I_{n-k} \end{pmatrix}
\]
is an element of $\mf{n}_\g(\h')$ 
whose adjoint action on $\g/\h'$ is given by a non-trivial homothety: 
\[
\ad_{\g/\h'}(X_0)(X + \h') = -n (X + \h'). 
\]
Hence, the action of $\mf{n}_\g(\h')$ on $\g/\h'$ is not trace-free. 

To prove condition (a), consider 
the following closed subset $S$ of $H'$: 
\begin{align*}
S &= 
\left\{
\begin{pmatrix}
g & \diag(t_1, \dots, t_k) & 0 \\
0 & I_k & 0 \\
0 & 0 & I_{n-2k}
\end{pmatrix}
\ \middle| \ g \in \SL(k, \K), t_1, \dots, t_k \in \R 
\right\}.
\end{align*}
Note that $S \subset H$, hence $\mu(S) \subset \mu(H)$. 
It suffices to verify $\mu(S) = \mu(H')$. 
Take any 
\[
h' = \begin{pmatrix} g & X \\ 0 & I_{n-k} \end{pmatrix} \in H'. 
\]
By the singular value decomposition for $\M(k,n-k; \K)$, we see that: 
\begin{itemize}
\item When $\K = \R$, there exist
$\ell \in \SO(k)$, $\ell' \in \SO(n-k)$, and $t_1, \dots, t_k \in \R$ 
such that
\[
\ell X \ell'^{-1} = 
\begin{pmatrix} \diag(t_1, \dots, t_k) & 0 \end{pmatrix}. 
\]
\item When $\K = \C$, there exist
$\ell \in \U(k)$, $\ell' \in \U(n-k)$, and $t_1, \dots, t_k \in \R$ 
such that $\det (\ell) \det (\ell') = 1$ and 
\[
\ell X \ell'^{-1} = 
\begin{pmatrix} \diag(t_1, \dots, t_k) & 0 \end{pmatrix}. 
\]
\item When $\K = \Ha$, there exist
$\ell \in \Sp(k)$, $\ell' \in \Sp(n-k)$, and $t_1, \dots, t_k \in \R$ 
such that
\[
\ell X \ell'^{-1} = 
\begin{pmatrix} \diag(t_1, \dots, t_k) & 0 \end{pmatrix}. 
\]
\end{itemize}
In any case, 
\[
\begin{pmatrix} \ell & 0 \\ 0 & \ell' \end{pmatrix}
h'
\begin{pmatrix} \ell & 0 \\ 0 & \ell' \end{pmatrix}^{-1}
= 
\begin{pmatrix}
\ell g \ell^{-1} & \diag(t_1, \dots, t_k) & 0 \\
0 & I_k & 0 \\
0 & 0 & I_{n-2k}
\end{pmatrix}
\in S. 
\]
This shows $\mu(S) = \mu(H')$. 
\end{proof}

\begin{rmk}
When $m$ is odd, we can alternatively prove Theorem~\ref{thm:main-sl} 
in the following way (we explain the real case only). 
Let $k = \lfloor m/2 \rfloor$, and define the closed subgroup $H''$ of $G$ 
as follows: 
\[
H'' = 
\left\{
\begin{pmatrix}
g & v & X \\ 0 & \det g^{-1} & 0 \\ 0 & 0 & I_{n-k-1}
\end{pmatrix}
\ \middle| \ 
\begin{matrix} 
g \in \GL(k, \R), v \in \R^k, \\ X \in \M(k, n-k-1; \R)
\end{matrix}
\right\}.
\]
We can see as in the previous proof that: 
\begin{enumerate}[label = (\alph*)]
\item $\mu(H'') \subset \mu(H)$. 
\item $d(H'') > d(H)$. 
\end{enumerate}
By Proposition~\ref{prop:Kob92-2}~(1), 
$G/H$ does not admit a compact Clifford--Klein form. 
The author does not know if $G/H''$ 
admits a compact Clifford--Klein form or not. 
\end{rmk}

\begin{rmk}[cf.\ Remark~\ref{rmk:sl-other}~(i)]\label{rmk:compare-Kob92}
In \cite{Kob92}, \cite{Kob97}, Kobayashi used 
Fact~\ref{fact:Kob92-1}. He compared $\SL(m, \K)$ with 
\[
H'_\text{Kob} = 
\begin{cases}
\SO(k, n-k) & \text{if $\K = \R$}, \\
\SU(k, n-k) & \text{if $\K = \C$}, \\
\Sp(k, n-k) & \text{if $\K = \Ha$},
\end{cases}
\]
where $k = \lfloor m/2 \rfloor$. 
We have improved his result by comparing $\SL(m, \K)$ 
with the non-reductive subgroup $H'$, 
rather than the reductive subgroup $H'_\text{Kob}$. 
\end{rmk}

\begin{rmk}[cf.\ Remark~\ref{rmk:sl-other}~(viii)]\label{rmk:compare-Yos07RepSymp}
In \cite{Yos07RepSymp}, Yoshino used an analogue of 
Fact~\ref{fact:Kob92-1} for Cartan motion groups 
(\cite[Th.~9]{Yos07IJM}). 
For instance, when $\K = \R$ and $m$ is even, 
he compared $\SL(m, \R)_\theta$ with 
\[
H'_\text{Yos} = 
\{ 1 \} \ltimes \left\{ 
\begin{pmatrix} A & X \\ {^t}\overline{X} & 0 \end{pmatrix}
\ \middle| \ 
\begin{matrix}
A \in \Sym(k, \R), \trace A = 0, \\ X \in \M(k, n-k; \R)
\end{matrix}
\right\}, 
\]
where $k = \lfloor m/2 \rfloor$. 
Our choice of $H'$ is inspired by $H'_\text{Yos}$. 
\end{rmk}

\section{Proof of Theorem~\ref{thm:main-associated}}
\label{sect:proof-main-associated}

In this section, we give a proof of Theorem~\ref{thm:main-associated}. 
To exhibit our main ideas concretely, 
we first give a proof of one particular example of this theorem: 

\begin{prop}[Example~\ref{ex:sl-so}]\label{prop:sl-so}
If $p,q \geqslant 1$, then 
$\SL(p+q, \R)/\SO_0(p,q)$ does not admit a compact Clifford--Klein form. 
\end{prop}

\begin{proof}[Proof of Proposition~\ref{prop:sl-so}]
We write $G/H = \SL(p+q, \R) / \SO_0(p,q)$. 
Define the closed subgroup $H'$ of $G$ by 
\[
H' = 
\left\{ \begin{pmatrix} k & X \\ 0 & \ell \end{pmatrix}
\ \middle| \ X \in \M(p,q; \R), k \in \SO(p), \ell \in \SO(q) \right\}. 
\]
By Proposition~\ref{prop:Kob92-2}~(2), 
it suffices to verify that the following three conditions are satisfied: 
\begin{enumerate}[label = (\alph*)]
\item $\mu(H') = \mu(H)$. 
\item $d(H') = d(H)$. 
\item $G/H'$ does not admit a compact Clifford--Klein form.
\end{enumerate}

The maximal compact subgroups of $H$ and $H'$ are both 
$\SO(p) \times \SO(q)$. Hence $d(H') = d(H) = pq$, 
and condition (b) holds. 

Condition (c) follows from Proposition~\ref{prop:trace-free}, 
as in the proof of Theorem~\ref{thm:main-sl}. 
Indeed, the $\h'$-action on $\g/\h'$ is trace-free since 
$G$ and $H'$ are unimodular, while
\[
X_0 = \begin{pmatrix} q I_p & 0 \\ 0 & -p I_q \end{pmatrix}
\]
is an element of $\mf{n}_\g(\h')$ such that 
$\trace (\ad_{\g/\h'} (X_0)) = -pq(p+q) \ (\neq 0)$. 

Condition (a) was proved by Kobayashi~\cite[Ex.~6.5]{Kob96}. 
In order to explain ideas, we reproduce his proof here. 
Assume without loss of generality that $p \leqslant q$. 
Define the closed embedding of Lie groups 
\[
i \colon \underbrace{\SL(2, \R) \times \dots \times \SL(2, \R)}_p 
\to \SL(p+q, \R)
\]
by
\begin{align*}
&\ \ \ i\left(
\begin{pmatrix} a_1 & b_1 \\ c_1 & d_1 \end{pmatrix},
\dots,
\begin{pmatrix} a_p & b_p \\ c_p & d_p \end{pmatrix} 
\right) \\
&=
\begin{pmatrix}
\diag(a_1, \dots, a_p) & \diag(b_1, \dots, b_p) & 0 \\
\diag(c_1, \dots, c_p) & \diag(d_1, \dots, d_p) & 0 \\
0 & 0 & I_{q-p}
\end{pmatrix},
\end{align*}
and put $G' = i(\SL(2, \R) \times \dots \times \SL(2, \R))$.
Define the closed subgroup $A'$ of $G'$ by 
\[
A' = i(A_{\SL(2, \R)} \times \dots \times A_{\SL(2, \R)}), \quad
A_{\SL(2, \R)} = 
\left\{
\begin{pmatrix}
\cosh t & \sinh t \\ \sinh t & \cosh t 
\end{pmatrix}
\ \middle| \ 
t \in \R
\right\}. 
\]
Since $\mf{a}'$ is a maximal abelian subspace of $\p \cap \g'$, 
we have $\mu(G') = \mu(A')$ by the $KAK$ decomposition of $G'$.
Notice that $\mf{a}'$ is also a maximal abelian subspace of 
$\p \cap \h$, and $\mu(H) = \mu(A')$. Let 
\[
U' = i(U_{\SL(2, \R)} \times \dots \times U_{\SL(2, \R)}), 
\quad
U_{\SL(2, \R)} = \left\{ \begin{pmatrix} 1 & t \\ 0 & 1 \end{pmatrix} 
\ \middle| \ t \in \R \right\}.
\]
Since
\[
\SO(2) \cdot U_{\SL(2, \R)} \cdot \SO(2) = \SL(2, \R), 
\]
we have $\mu(G') = \mu(U')$. 
On the other hand, it follows from the singular value decomposition for 
$\M(p,q; \R)$ that $\mu(H') = \mu(U')$. 
We have thus proved $\mu(H') = \mu(H)$. 
\end{proof}

\begin{rmk}\label{rmk:use-U-1}
One may use 
\[
U = \left\{ \begin{pmatrix} I_p & X \\ 0 & I_q \end{pmatrix} 
\ \middle| \ X \in \M(p,q; \R) \right\}
\]
instead of $H'$ in the proof of Example~\ref{ex:sl-so}. 
\end{rmk}

Now, we prove Theorem~\ref{thm:main-associated} in full generality, 
generalizing the above proof: 

\begin{proof}[Proof of Theorem~\ref{thm:main-associated}]
We use the notation of Section~\ref{sect:preliminaries-red}. 
We can assume without loss of generality that $\h^a = \mf{l}_{\Pi'}$ 
for some non-empty subset $\Pi'$ of the simple system $\Pi$. 
Let $\mf{u} = \mf{u}_{\Pi'}$ (resp.\ $U = U_{\Pi'}$) 
be the corresponding horospherical subalgebra of $\g$ 
(resp.\ horospherical subgroup of $G$). 
Let us define a subalgebra $\h'$ of $\g$ by 
$\h' = (\mf{k} \cap \h) \ltimes \mf{u}$. Here, $\mf{k} \cap \h$ 
normalizes $\mf{u}$ since $\mf{k} \cap \h \subset \h^a$. 
Let $H'$ be the connected Lie subgroup of $G$ corresponding to $\h'$. 
We see that $H' = (K \cap H)_0 \ltimes U$, where $(K \cap H)_0$ 
is the identity component of $K \cap H$. 
In particular, $H'$ is closed in $G$. 
By Proposition~\ref{prop:Kob92-2}~(2), 
it suffices to verify that the following three conditions are satisfied: 
\begin{enumerate}[label = (\alph*)]
\item $\mu(H') = \mu(H)$. 
\item $d(H') = d(H)$. 
\item $G/H'$ does not admit a compact Clifford--Klein form.
\end{enumerate}

By Lemma~\ref{lem:symmetric-levi}~(1) (i)~$\Rightarrow$~(ii), 
\[
f \colon \mf{u} \to \p \cap \h, \qquad X \mapsto X - \theta X
\]
is a linear isomorphism. It follows that
\[
d(H') = \dim \mf{u} = \dim (\p \cap \h) = d(H), 
\]
i.e.\ condition (b) holds. 

Condition (c) follows from Proposition~\ref{prop:trace-free}. 
Indeed, the $\h'$-action on $\g/\h'$ is trace-free since $G$ and $H'$ 
are unimodular, while any non-zero element $X$ of 
$\mf{z}(\h^a) \cap [\g, \g] \ (\subset \mf{n}_\g(\h'))$ 
satisfies $\trace (\ad_{\g/\h'} (X)) \neq 0$ 
(\cite[Ex.~7.1]{Mor17PRIMS}). 

Let us verify condition (a). Fix a total order $\leqslant$ on 
$\Sigma$ compatible with the simple system $\Pi$. 
We choose $\lambda_1, \dots, \lambda_r \in \Sigma_{\Pi', +}$ and 
$X_1 \in \g_{\lambda_1}, \dots, X_r \in \g_{\lambda_r}$ as follows
(cf.\ \cite[Ch.~VIII, Proof of Prop.~7.4]{Hel62}): 
put $\g^{(0)}_\lambda = \g_\lambda$ for each 
$\lambda \in \Sigma_{\Pi', +}$, 
and continue the following procedure inductively on $k$ until 
$\Sigma^{(k)}_{\Pi', +}$ becomes the empty set: 
\begin{itemize}
\item Put 
$\Sigma^{(k)}_{\Pi', +} = 
\{ \lambda \in \Sigma_{\Pi', +} \mid \g^{(k)}_\lambda \neq 0 \}$. 
Let $\lambda_{k+1}$ be the lowest root in 
$\Sigma^{(k)}_{\Pi', +}$ with respect to $\leqslant$. 
Choose any non-zero element 
$X_{k+1}$ of $\g^{(k)}_{\lambda_{k+1}}$.
For each $\lambda \in \Sigma_{\Pi', +}$, put
$\g^{(k+1)}_\lambda = 
\{ X \in \g^{(k)}_\lambda \mid [X_{k+1}, \theta X] = 0 \}.$
\end{itemize}
Lemma~\ref{lem:in-p}~(1) ensures that this procedure terminates after 
a finite number of steps, and $\lambda_1 < \dots < \lambda_r$. 

Let us define a subspace $\g'$ of $\g$ by 
\[
\g' = \g'_1 \oplus \dots \oplus \g'_r, \qquad 
\g'_k = \R X_k \oplus \R \theta X_k \oplus \R [X_k, \theta X_k] \quad 
(1 \leqslant k \leqslant r). 
\]
For each $1 \leqslant k \leqslant r$, 
$\g'_k$ is a subalgebra of $\g$ isomorphic to $\mf{sl}(2, \R)$ 
(see e.g.\ Knapp~\cite[Prop.~6.52]{Kna02}). 
Moreover, the following holds: 

\begin{lem}\label{lem:sl2}
$\g'$ is a subalgebra of $\g$, and 
$\g' = \g'_1 \oplus \dots \oplus \g'_r$ 
is a direct sum decomposition of Lie algebras. 
\end{lem}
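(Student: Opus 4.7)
The plan is to reduce Lemma~\ref{lem:sl2} to showing that $[\g'_j, \g'_k] = 0$ whenever $j \neq k$; together with the fact, already recorded, that each $\g'_k$ is an $\mf{sl}(2,\R)$-triple, this will make $\g'$ a Lie subalgebra of $\g$ and the sum a direct sum of Lie algebras, provided also that the $\g'_k$ are linearly independent as subspaces.

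First come the routine vanishings. The bracket $[X_j, X_k]$ lies in $\g_{\lambda_j + \lambda_k}$, and since $\h^a = \mf{l}_{\Pi'}$ is symmetric, Lemma~\ref{lem:symmetric-levi}~(1)~(i)~$\Rightarrow$~(iv) forces $\lambda_j + \lambda_k \notin \Sigma$; hence $[X_j, X_k] = 0$, and applying $\theta$ also $[\theta X_j, \theta X_k] = 0$. For the mixed bracket $[X_j, \theta X_k]$ with $j < k$, the inductive construction gives $X_k \in \g^{(k-1)}_{\lambda_k} \subset \g^{(j)}_{\lambda_k}$, so $[X_j, \theta X_k] = 0$ directly from the definition of $\g^{(j)}_{\lambda_k}$; for $j > k$ one applies $\theta$ to the analogous vanishing $[X_k, \theta X_j] = 0$ and uses that $\theta$ is a Lie algebra involution.

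The main step, and the genuinely delicate one, is to show that the Cartan elements $H_j = [X_j, \theta X_j]$ commute with the other triples. I will derive this from the Jacobi identity: using $[X_j, X_k] = 0$ and $[\theta X_j, X_k] = -[X_k, \theta X_j] = 0$, the identity applied to $(X_j, \theta X_j, X_k)$ collapses to $[H_j, X_k] = 0$, and the analogous computation with $\theta X_k$ in place of $X_k$ yields $[H_j, \theta X_k] = 0$; then $[H_j, H_k] = 0$ follows automatically. Moreover, since $H_j$ is a nonzero multiple of $H_{\lambda_j}$, the identity $[H_j, X_k] = 0$ reads $\lambda_k(H_{\lambda_j}) = 0$, i.e.\ $B_\theta(H_{\lambda_j}, H_{\lambda_k}) = 0$; thus the $H_{\lambda_k}$ are pairwise orthogonal nonzero elements of $\mf{a}$, and so linearly independent. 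This settles the vector space independence of the $\g'_k$: the lines $\R X_k$ and $\R \theta X_k$ lie in pairwise distinct root spaces $\g_{\pm \lambda_k}$, while the $H_k$ are independent precisely because the $H_{\lambda_k}$ are.
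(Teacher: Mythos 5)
Your proof is correct and follows essentially the same route as the paper's: reduce to $[\g'_j,\g'_k]=0$ for $j\neq k$, kill $[X_j,X_k]$ via Lemma~\ref{lem:symmetric-levi} (the paper cites (i)~$\Rightarrow$~(iii) rather than (iv), an equivalent condition), kill $[X_j,\theta X_k]$ via the inductive construction of the $X_k$'s, and obtain the remaining brackets from the Jacobi identity. The only difference is that you also verify explicitly that the sum is direct as a vector space, via the pairwise orthogonality of the $H_{\lambda_k}$ --- a point the paper leaves implicit.
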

\begin{proof}[Proof of Lemma~\ref{lem:sl2}]
We need to verify $[\g'_k, \g'_{\ell}] = 0$ for $k \neq \ell$. 
It is enough to see that:
\begin{align*}
&[X_k, X_\ell] = 0, \qquad 
[X_k, \theta X_\ell] = 0, \qquad
[X_k, [X_\ell, \theta X_\ell]] = 0, \\
&[[X_k, \theta X_k], [X_\ell, \theta X_\ell]] = 0.
\end{align*}
The first equality follows from 
Lemma~\ref{lem:symmetric-levi}~(1) (i)~$\Rightarrow$~(iii). 
The second equality is immediate from the definition of 
$(X_1, \dots, X_r)$.
The remaining two equalities are obtained by the Jacobi identity. 
\end{proof}

Let $G'$ be the connected Lie subgroup of $G$ corresponding to $\g'$. 
Since $\g'$ is semisimple and $G$ is linear, $G'$ is closed in $G$ 
(Yosida~\cite{Yos38}). Note that $G'$ is $\theta$-stable. 

Let us define a subalgebra $\mf{a}'$ of $\g'$ by 
\[
\mf{a}' = 
\R (X_1 - \theta X_1) \oplus \dots \oplus \R (X_r - \theta X_r). 
\]

\begin{lem}\label{lem:maximal-abelian}
\begin{enumerate}[label = (\arabic*)]
\item $\mf{a}'$ is a maximal abelian subspace of $\p \cap \g'$. 
\item $\mf{a}'$ is a maximal abelian subspace of $\p \cap \h$.
\end{enumerate}
\end{lem}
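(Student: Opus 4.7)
My plan is to exploit the Lie-algebraic structure of $\g'=\bigoplus_{k=1}^r \g'_k$ obtained in Lemma~\ref{lem:sl2}. For part (1), each summand $\g'_k$ is a $\theta$-stable copy of $\mf{sl}(2,\R)$, so the Cartan decomposition yields
\[
\p\cap\g'_k=\R(X_k-\theta X_k)\oplus\R[X_k,\theta X_k],
\]
and because $\mf{sl}(2,\R)$ has real rank one, the line $\R(X_k-\theta X_k)$ is a maximal abelian subspace of $\p\cap\g'_k$. Since the $\g'_k$ pairwise commute, the direct sum $\mf{a}'=\bigoplus_k\R(X_k-\theta X_k)$ is a maximal abelian subspace of $\p\cap\g'=\bigoplus_k(\p\cap\g'_k)$.

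For part (2), the inclusion $\mf{a}'\subset\p\cap\h$ follows from $X_k-\theta X_k=f(X_k)\in\p\cap\h$, and abelianness is already established in part (1). The remaining and essential step is to show maximality inside $\p\cap\h$. I would take $Y\in\p\cap\h$ commuting with every element of $\mf{a}'$ and write $Y=Z-\theta Z$ for $Z=\sum_{\lambda\in\Sigma_{\Pi',+}}Z_\lambda\in\mf{u}$. Expanding the bracket $[X_k-\theta X_k,Z-\theta Z]$ and using that $\mf{u}$ is abelian (by Lemma~\ref{lem:symmetric-levi}), the four-term commutator collapses to $-(1+\theta)[X_k,\theta Z]$, whose vanishing is equivalent to the condition $[X_k,\theta Z]\in\p$ for each $k$. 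My aim is then to deduce from this set of conditions that $Z\in\R X_1\oplus\cdots\oplus\R X_r$, so that $Y\in\mf{a}'$.

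The main obstacle lies in extracting maximality from the condition $[X_k,\theta Z]\in\p$. Root space by root space, the bracket $[X_k,\theta Z_\lambda]$ lies in $\g_{\lambda_k-\lambda}$; for the total sum to be $\theta$-anti-fixed, the components in $\g_{\lambda_k-\lambda}$ and $\g_{\lambda-\lambda_k}$ must be related by $\theta$, coupling $Z_\lambda$ with $Z_{2\lambda_k-\lambda}$. My strategy to untangle this coupling is to proceed by induction following the order $\leqslant$ used to select the $\lambda_k$'s: considering the lowest $\lambda$ with $Z_\lambda\neq 0$, the minimality property built into the inductive procedure forces $\lambda$ to coincide with some $\lambda_k$, and Lemma~\ref{lem:in-p}\,(2) then pins $Z_\lambda$ down to a scalar multiple of $X_k$; subtracting this off and iterating strips $Z$ down to the span of the $X_k$'s. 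Alternatively, one can invoke the classical theory of strongly orthogonal roots (in the spirit of the reference \cite[Ch.~VIII, Prop.~7.4]{Hel62} cited in the construction) to identify $\mf{a}'$ directly with a maximal abelian subspace of $\p\cap\h$.
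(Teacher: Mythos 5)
Your route coincides with the paper's: part (1) from the commuting $\theta$-stable $\mf{sl}(2,\R)$-triples of Lemma~\ref{lem:sl2}, and part (2) by writing $Y=Z-\theta Z$ with $Z=\sum_{\lambda}Z_\lambda\in\mf{u}$, reducing $[Y,\mf{a}']=0$ to $[X_k,\theta Z]\in\p$ for all $k$ (your collapse of the four-term bracket using the abelianness of $\mf{u}$ is exactly right), and then analyzing root space by root space with Lemma~\ref{lem:in-p}~(2). The identification of the coupling between $Z_\lambda$ and $Z_{2\lambda_k-\lambda}$ is also the paper's.

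The one step you assert rather than prove is precisely the decisive one: why the lowest $\nu$ with $Z_\nu\neq 0$ must be one of the $\lambda_k$. The ``minimality built into the inductive procedure'' is minimality of $\lambda_{s+1}$ among roots $\lambda$ with $\g^{(s)}_\lambda\neq 0$, not among all roots of $\Sigma_{\Pi',+}$ exceeding $\lambda_s$; so before it can be invoked you must first show that $Z_\nu$ lies in $\g^{(s)}_\nu$, where $s$ is the number of indices $j$ with $\lambda_j<\nu$. Concretely, for each such $j$ the $\g_{\nu-\lambda_j}$-component of $[X_j,\theta Z]+\theta[X_j,\theta Z]=0$ reads
\[
[X_j,\theta Z_{2\lambda_j-\nu}]+[\theta X_j,Z_\nu]=0,
\]
and the first term vanishes because $2\lambda_j-\nu<\nu$ and either $Z_{2\lambda_j-\nu}=0$ (it is below the lowest surviving component) or it is a multiple of some $X_i$ with $i\neq j$, which kills the bracket. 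Hence $[X_j,\theta Z_\nu]=0$ for all $j\leqslant s$, so $\nu\in\Sigma^{(s)}_{\Pi',+}$, and only now does the definition of $\lambda_{s+1}$ as the lowest root of $\Sigma^{(s)}_{\Pi',+}$ force $\lambda_{s+1}=\nu$ (if $s=r$ one gets the contradiction $\Sigma^{(r)}_{\Pi',+}=\varnothing$). With this inserted, your iterative stripping argument closes and is equivalent to the paper's proof, which instead treats all $\lambda_k$-components at once via the $\g_0$-component and then runs a minimal-counterexample argument on the remaining roots. Your fallback of quoting the strongly orthogonal root theory of \cite{Hel62} wholesale does not substitute for this: that theory produces a maximal abelian subspace of $\p$ (or shows the $X_k$ exist, which is how the paper uses it), whereas maximality inside $\p\cap\h$ is exactly the content of the computation above.
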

\begin{proof}[Proof of Lemma~\ref{lem:maximal-abelian}]
(1) This is immediate from Lemma~\ref{lem:sl2}. 

(2) Let $Y \in \p \cap \h$. By Lemma~\ref{lem:symmetric-levi}~(1) 
(i)~$\Rightarrow$~(ii), we can express $Y$ in the form 
\[
Y = \sum_{\lambda \in \Sigma_{\Pi', +}} (Y_\lambda - \theta Y_\lambda) 
\qquad (Y_\lambda \in \g_\lambda).
\]
Recall that $\mf{u}$ is abelian 
(Lemma~\ref{lem:symmetric-levi}~(1) (i)~$\Rightarrow$~(iii)). 
The condition $[Y, \mf{a}'] = 0$ can be rewritten as follows: 
\[
\sum_{\lambda \in \Sigma_{\Pi', +}} 
([Y_\lambda, \theta X_k] + [\theta Y_\lambda, X_k]) = 0 
\qquad (1 \leqslant k \leqslant r). \tag{$\ast$}\label{tag:Yj}
\]
It suffices to show that (\ref{tag:Yj}) holds if and only if 
the following two conditions are satisfied: 
\begin{enumerate}[label = (\roman*)]
\item $Y_{\lambda_k} \in \R X_k$ for each $1 \leqslant k \leqslant r$.
\item $Y_\lambda = 0$ for each $\lambda \in \Sigma_{\Pi', +} 
\smallsetminus \{ \lambda_1, \dots, \lambda_r \}$.
\end{enumerate}
The `if' part is obvious from the definition of $(X_1, \dots, X_r)$. 
Let us prove the `only if' part. 
By looking at the $\g_0$-component of (\ref{tag:Yj}), we obtain 
\[
[Y_{\lambda_k}, \theta X_k] + [\theta Y_{\lambda_k}, X_k] = 0 \qquad 
(1 \leqslant k \leqslant r).
\]
In other words, $[Y_{\lambda_k}, \theta X_k] \in \p$ for 
$1 \leqslant k \leqslant r$.
By Lemma~\ref{lem:in-p}~(2), condition (i) holds. 
Suppose that there were $Y \in \p \cap \h$ that satisfies (\ref{tag:Yj}) 
but does not satisfy condition (ii). 
Let $\nu$ be the lowest root in $\Sigma_{\Pi', +} \smallsetminus 
\{ \lambda_1, \dots, \lambda_r \}$ such that 
$Y_\nu \neq 0$. 
Let $s$ be a largest number such that 
$1 \leqslant s \leqslant r$ and $\lambda_{s} < \nu$.
For any $1 \leqslant k \leqslant s$ and any 
$\lambda \in \Sigma_{\Pi', +} \smallsetminus 
\{ \lambda_1, \dots, \lambda_r \}$ with $Y_\lambda \neq 0$, 
the inequality $-\lambda + \lambda_k < \nu - \lambda_k$ holds. 
Thus, by looking at the $\g_{\nu - \lambda_k}$-component of 
(\ref{tag:Yj}), we obtain
\[
[Y_\nu, \theta X_k] = 0 \qquad (1 \leqslant k \leqslant s), 
\]
i.e.\ $\nu \in \Sigma^{(s)}_{\Pi', +}$. 
If $s = r$, this contradicts to $\Sigma^{(r)}_{\Pi', +} = \varnothing$. 
If $s < r$, then $\lambda_{s+1} \leqslant \nu$ by the definition of 
$\lambda_{s+1}$, and this contradicts to the definition of $s$.
\end{proof}

Let $A'$ be the connected Lie subgroup of $G'$ corresponding to 
$\mf{a}'$. 
It is a closed subgroup of $G'$ isomorphic to $\R^r$. 

\begin{lem}\label{lem:mu-a'}
\begin{enumerate}[label = (\arabic*)]
\item $\mu(G') = \mu(A')$. 
\item $\mu(H) = \mu(A')$. 
\end{enumerate}
\end{lem}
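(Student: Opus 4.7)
The plan is to derive both identities from $KAK$-type decompositions combined with the bi-$K$-invariance of $\mu$ and the two parts of Lemma~\ref{lem:maximal-abelian}.

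For (1), I first observe that $\g'$ is $\theta$-stable by its very definition ($\theta$ permutes the generators $X_i \mapsto \theta X_i$ of each $\g'_i$). Hence $\theta$ restricts to a Cartan involution of the semisimple Lie algebra $\g'$, and $G'$ is a connected semisimple $\theta$-stable subgroup of $G$ with maximal compact subgroup $K' := K \cap G'$ and Cartan decomposition $G' = K' \exp(\p \cap \g')$. Lemma~\ref{lem:maximal-abelian}~(1) says that $\mf{a}'$ is a maximal abelian subspace of $\p \cap \g'$, so every element of $\p \cap \g'$ is $\Ad(K')$-conjugate to one in $\mf{a}'$, which yields the $KAK$ decomposition $G' = K' A' K'$. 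Since $K' \subset K$, every $g \in G'$ can be written $g = k_1 a k_2$ with $k_1, k_2 \in K$ and $a \in A'$, and the bi-$K$-invariance of $\mu$ gives $\mu(g) = \mu(a) \in \mu(A')$. The reverse inclusion $\mu(A') \subset \mu(G')$ is trivial from $A' \subset G'$.

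For (2), the inclusion $\mu(A') \subset \mu(H)$ follows from $\mf{a}' \subset \p \cap \h \subset \h$, which forces the connected subgroup $A'$ to lie in $H_0 \subset H$. For the reverse inclusion, the hypothesis $\sigma\theta = \theta\sigma$ ensures that $\theta$ restricts to a Cartan involution of $H$, giving the polar decomposition $H = (K \cap H) \exp(\p \cap \h)$ with $K \cap H$ a maximal compact subgroup of $H$. By Lemma~\ref{lem:maximal-abelian}~(2), $\mf{a}'$ is a maximal abelian subspace of $\p \cap \h$, so every $X \in \p \cap \h$ is $\Ad(K \cap H)$-conjugate to an element of $\mf{a}'$. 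This upgrades the polar decomposition to $H = (K \cap H) A' (K \cap H)$, and writing $h = k_1 a k_2$ with $k_1, k_2 \in K \cap H \subset K$ and $a \in A'$ we again conclude $\mu(h) = \mu(a) \in \mu(A')$ from the bi-$K$-invariance of $\mu$.

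The substantive content of the lemma has in fact already been absorbed into Lemmas~\ref{lem:sl2} and~\ref{lem:maximal-abelian}; the only point requiring minimal care is that the maximal compact subgroup and the Cartan subspace associated with $G'$ (resp.~$H$) can be chosen to sit inside the ambient data $(K, \mf{a})$ for $G$, which is automatic from $\theta$-stability of $\g'$ and from $\sigma\theta = \theta\sigma$ respectively. I therefore do not anticipate any serious obstacle in this proof.
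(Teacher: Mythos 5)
Your proof is correct and follows exactly the route the paper takes: the paper's own proof consists of the two lines ``(1) follows from Lemma~\ref{lem:maximal-abelian}~(1) and the $KAK$ decomposition of $G'$'' and ``(2) follows from Lemma~\ref{lem:maximal-abelian}~(2) and the $KAK$ decomposition of $H$,'' and your argument is simply the careful expansion of those two sentences (checking $\theta$-stability of $\g'$, the polar decompositions, and the bi-$K$-invariance of $\mu$). No gaps; nothing further is needed.
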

\begin{proof}[Proof of Lemma~\ref{lem:mu-a'}]
(1) This follows from Lemma~\ref{lem:maximal-abelian}~(1) 
and the $KAK$ decomposition of $G'$.

(2) This follows from Lemma~\ref{lem:maximal-abelian}~(2) 
and the $KAK$ decomposition of $H$.
\end{proof}

Define a subalgebra $\mf{u}'$ of $\g'$ by 
\[
\mf{u}' = \R X_1 \oplus \dots \oplus \R X_r 
\]
and let $U'$ be the connected Lie subgroup of $G'$ corresponding to 
$\mf{u}'$. 
It is a closed subgroup of $G'$ isomorphic to $\R^r$. 

\begin{lem}\label{lem:mu-u'}
\begin{enumerate}[label = (\arabic*)]
\item $\mu(G') = \mu(U')$. 
\item $\mu(H') = \mu(U')$. 
\end{enumerate}
\end{lem}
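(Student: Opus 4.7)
My plan is to mirror the two-step strategy used in the proof of Example~\ref{ex:sl-so}: part (1) will be reduced to the $\SL(2,\R)$-identity $\SO(2) \cdot U_0 \cdot \SO(2) = \SL(2,\R)$ applied factor-by-factor, and part (2) will be reduced to a $KAK$-type sweeping of $\mf{u}$ by $(K \cap H)_0$, carried through the linear isomorphism $f \colon \mf{u} \to \p \cap \h$, $X \mapsto X - \theta X$, introduced earlier.

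For part (1), I would use Lemma~\ref{lem:sl2} to write $G' = G'_1 \cdots G'_r$ with pairwise commuting factors, each $G'_k$ being a connected Lie subgroup of $G$ with Lie algebra isomorphic to $\mf{sl}(2,\R)$. Because $\g'_k$ is $\theta$-stable (it is spanned by $X_k$, $\theta X_k$, and their bracket), the subgroup $K'_k := K \cap G'_k$ is a maximal compact of $G'_k$, and the standard $\SL(2,\R)$ decomposition gives $G'_k = K'_k \cdot U'_k \cdot K'_k$. Since the $G'_k$ pairwise commute, any $g \in G'$ can be rearranged into the form $k \cdot u \cdot k'$ with $k, k' \in K'_1 \cdots K'_r \subset K$ and $u \in U'_1 \cdots U'_r = U'$, the last equality using that $\mf{u}'$ is abelian. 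Bi-$K$-invariance of $\mu$ then yields $\mu(G') \subset \mu(U')$, and the reverse inclusion is immediate from $U' \subset G'$.

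For part (2), I would first observe that $H' = (K \cap H)_0 \ltimes U$ combined with bi-$K$-invariance of $\mu$ gives $\mu(H') = \mu(U)$. Since $U' \subset U$, the substantive task is $\mu(U) \subset \mu(U')$, which I would derive from the claim that every $X \in \mf{u}$ is $\Ad((K \cap H)_0)$-conjugate to an element of $\mf{u}'$. The map $f$ is $\Ad((K \cap H)_0)$-equivariant (elements of $K$ commute with $\theta$) and carries $\mf{u}'$ bijectively onto $\mf{a}'$ by construction. By Lemma~\ref{lem:maximal-abelian}(2), $\mf{a}'$ is a maximal abelian subspace of $\p \cap \h$, so the standard infinitesimal $KAK$ applied to the reductive Lie group $H$ (with Cartan decomposition $\h = (\mf{k} \cap \h) \oplus (\p \cap \h)$) gives $\Ad((K \cap H)_0) \cdot \mf{a}' = \p \cap \h$; pulling back through $f$ yields the sweeping statement for $\mf{u}$. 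Conjugating $\exp X$ by the resulting $k \in (K \cap H)_0 \subset K$ produces an element of $U'$ with the same Cartan projection, completing the proof.

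The main delicate points will be checking (i) that $(K \cap H)_0$ really preserves $\mf{u}$, which follows from $\mf{k} \cap \h \subset \h^a = \mf{l}_{\Pi'}$ normalizing $\mf{u}$, and (ii) that the $KAK$ decomposition of $H$ is available in the precise form $\Ad((K \cap H)_0) \cdot \mf{a}' = \p \cap \h$; the latter is standard once $K \cap H$ is identified with a maximal compact of $H$, which holds because $H$ is $\theta$-stable. Once these points are in place, both parts of the lemma follow from the same bi-$K$-invariance arguments that underlie the proof of Example~\ref{ex:sl-so}.
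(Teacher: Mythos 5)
Your proposal is correct and follows essentially the same route as the paper: part (1) is reduced factor-by-factor to the identity $\SO(2)\cdot U_{\SL(2,\R)}\cdot\SO(2)=\SL(2,\R)$ via Lemma~\ref{lem:sl2}, and part (2) uses the $(K\cap H)_0$-equivariance of $f\colon \mf{u}\to\p\cap\h$ together with $\p\cap\h=\Ad((K\cap H)_0)\mf{a}'$ to conclude $\mf{u}=\Ad((K\cap H)_0)\mf{u}'$, exactly as in the paper. The only difference is that you spell out the $KAK$-type bookkeeping for $G'$ in more detail than the paper does.
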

\begin{proof}[Proof of Lemma~\ref{lem:mu-u'}]
(1) Since $G'$ is locally isomorphic to the product of $r$ copies of 
$\SL(2, \R)$, this follows from 
\[
\SO(2) \cdot U_{\SL(2, \R)} \cdot \SO(2) = \SL(2, \R),
\]
where 
\[
U_{\SL(2, \R)} = 
\left\{ \begin{pmatrix} 1 & t \\ 0 & 1 \end{pmatrix} \ \middle| \ 
t \in \R \right\}.
\]

(2) The linear isomorphism 
\[
f \colon \mf{u} \to \p \cap \h, \qquad X \mapsto X - \theta X
\]
is $(\mf{k} \cap \h)$-equivariant and thus $(K \cap H)_0$-equivariant. 
Since $f(\mf{u}') = \mf{a}'$ and 
$\p \cap \h = \Ad((K \cap H)_0) \mf{a}'$, we have 
$\mf{u} = \Ad((K \cap H)_0) \mf{u}'$. This can be rephrased as 
\[
U = \bigcup_{k \in (K \cap H)_0} kU'k^{-1}. 
\]
Thus, $\mu(H') = \mu(U) = \mu(U')$. 
\end{proof}

We can conclude from Lemmas~\ref{lem:mu-a'} and \ref{lem:mu-u'} that 
$\mu(H') = \mu(H)$, i.e.\ condition (a) holds. 
Theorem~\ref{thm:main-associated} is now proved. 
\end{proof}

\begin{rmk}[cf.\ Remark~\ref{rmk:use-U-1}]
One may use $U$ instead of $H'$ in the proof of 
Theorem~\ref{thm:main-associated}. 
\end{rmk}

\section{Some remarks and open questions}
\label{sect:remarks-questions}

\subsection{Enlargement of Lie groups}

We can refine Theorems~\ref{thm:main-sl} and \ref{thm:main-associated} 
as follows: 

\begin{prop}\label{prop:enlarge}
Let $G/H$ be either as in Theorem~\ref{thm:main-sl} or as in 
Theorem~\ref{thm:main-associated}. 
Let $\widetilde{G}$ be a linear unimodular Lie group containing 
$G$ as a closed subgroup. 
Let $L$ be a unimodular closed subgroup of $\widetilde{G}$ such that 
$G \cap L = \{ 1 \}$ and $L \subset Z_{\widetilde{G}}(G)$. 
Assume that $\widetilde{G}$ and $L$ 
have finitely many connected components. 
Then, $\widetilde{G}/(H \times L)$
does not admit a compact Clifford--Klein form.
\end{prop}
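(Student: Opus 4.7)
The plan is to re-run the arguments of Theorems~\ref{thm:main-sl} and \ref{thm:main-associated} inside $\widetilde{G}$, replacing the nonreductive test subgroup $H' \subset G$ constructed there by $H' \times L \subset \widetilde{G}$. The hypotheses $G \cap L = \{1\}$ and $L \subset Z_{\widetilde{G}}(G)$ ensure that $(h', \ell) \mapsto h'\ell$ realizes $H' \times L$ as a closed subgroup of $\widetilde{G}$, and similarly for $H \times L$, with $d(H' \times L) = d(H') + d(L)$ and $d(H \times L) = d(H) + d(L)$. By Lemma~\ref{lem:Kob92-refine} it is enough to verify, in $\widetilde{G}$, that (a')~$H' \times L \prec H \times L$, (b')~$d(H' \times L) \geqslant d(H \times L)$, and (c')~$\widetilde{G}/(H' \times L)$ admits no compact Clifford--Klein form.

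Conditions (a') and (b') transfer immediately. The proofs of Theorems~\ref{thm:main-sl} and \ref{thm:main-associated} establish $\mu(H') \subset \mu(H)$, hence $H' \prec H$ in $G$ by Fact~\ref{fact:proper-action-general}~(6); Fact~\ref{fact:proper-action-general}~(5) then promotes this to (a'), while additivity of $d$ converts $d(H') \geqslant d(H)$ into (b').

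The main obstacle is (c'), for which I would invoke Proposition~\ref{prop:trace-free} on the subalgebra $\widetilde{\h'} := \h' \oplus \mf{l} \subset \widetilde{\g}$. A direct check shows that $H'$ is unimodular in both constructions: in the $\SL$ case because the action of $\SL(k, \K)$ on $\M(k, n{-}k; \K)$ has determinant one, and in the symmetric case because $H' = (K \cap H)_0 \ltimes U$ with $(K \cap H)_0$ compact and $U$ abelian. Combined with the assumed unimodularity of $L$ and $\widetilde{G}$, this makes the $\widetilde{\h'}$-action on $\widetilde{\g}/\widetilde{\h'}$ trace-free.

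For the normalizer side, I would take the same element $X_0 \in \mf{n}_\g(\h')$ that produced a nonzero trace in each original proof. The inclusion $L \subset Z_{\widetilde{G}}(G)$ yields $[X_0, \mf{l}] = 0$, so $X_0 \in \mf{n}_{\widetilde{\g}}(\widetilde{\h'})$; exploiting the direct-sum decomposition $\widetilde{\h'} = \h' \oplus \mf{l}$ together with unimodularity of $G$ and $\widetilde{G}$, a short calculation gives
\begin{equation*}
\trace(\ad_{\widetilde{\g}/\widetilde{\h'}}(X_0)) = \trace(\ad_{\g/\h'}(X_0)) \neq 0.
\end{equation*}
Proposition~\ref{prop:trace-free} then supplies (c') and, via Lemma~\ref{lem:Kob92-refine}, concludes the argument.
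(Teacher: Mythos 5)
Your proposal is correct and follows essentially the same route as the paper: reduce to Lemma~\ref{lem:Kob92-refine} applied to $H'\times L$, obtain $H'\times L\prec H\times L$ from $\mu(H')\subset\mu(H)$ via Fact~\ref{fact:proper-action-general}~(5) and (6), use additivity of $d$, and verify (c$'$) by Proposition~\ref{prop:trace-free} for $\h'\oplus\mf{l}$ with the same normalizing element $X_0$. The only difference is cosmetic: you write out the trace computation $\trace(\ad_{\widetilde{\g}/(\h'\oplus\mf{l})}(X_0))=\trace(\ad_{\g/\h'}(X_0))$ explicitly, which the paper merely asserts.
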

\begin{proof}
We explain the case where $G/H$ is as in 
Theorem~\ref{thm:main-associated}. 
The other case is proved in the same way. 
In Section~\ref{sect:proof-main-associated}, 
we defined $H'$ and saw that: 
\begin{enumerate}[label = (\alph*)]
\item $\mu(H') = \mu(H)$. 
\item $d(H') = d(H)$. 
\item $G/H'$ does not admit a compact Clifford--Klein form.
\end{enumerate}
By Lemma~\ref{lem:Kob92-refine}~(2), to prove the proposition, 
it suffices to see that: 
\begin{enumerate}[label=(\alph*$'$)]
\item $H \times L \sim H' \times L$ in $\widetilde{G}$. 
\item $d(H \times L) = d(H' \times L)$. 
\item $\widetilde{G} / (H' \times L)$ 
does not admit a compact Clifford--Klein form. 
\end{enumerate}

By (a) and Fact~\ref{fact:proper-action-general}~(vi), 
We have $H' \sim H$ in $G$. 
It follows from Fact~\ref{fact:proper-action-general}~(v) 
that condition (a$'$) holds. 

Condition (b$'$) follows immediately from (b). Indeed, 
\[
d(H \times L) = d(H) + d(L), \qquad d(H' \times L) = d(H') + d(L). 
\]

Finally, condition (c$'$) is proved in the same way as (c): 
the $(\h' \oplus \mf{l})$-action on $\widetilde{\g}/(\h' \oplus \mf{l})$ 
is trace-free since $\widetilde{G}$, $H'$, and $L$ are unimodular, 
while any non-zero element $X$ of 
$\mf{z}(\h^a) \cap [\g, \g] \ 
(\subset \mf{n}_{\widetilde{\g}}(\h' \oplus \mf{l}))$ 
satisfies 
$\trace (\ad_{\widetilde{\g}/(\h' \oplus \mf{l})} (X)) \neq 0$, 
hence we can apply Proposition~\ref{prop:trace-free}. 
\end{proof}

\begin{rmk}
In Proposition~\ref{prop:enlarge}, 
we assumed the unimodularity of $\widetilde{G}$ and $L$
so that we can prove the non-existence of 
compact Clifford--Klein forms of $\widetilde{G} / (H' \times L)$.  
When $d(H') > d(H)$, we do not need this unimodularity assumption
since we can use 
Lemma~\ref{lem:Kob92-refine}~(1) instead of 
Lemma~\ref{lem:Kob92-refine}~(2). 
\end{rmk}

\subsection{An interpretation in terms of Cartan motion groups}

To apply Proposition~\ref{prop:Kob92-2}, 
we need to find a closed subgroup with small Cartan projection. 
Usually, the Cartan projections of non-reductive subgroups are 
difficult to compute. One exception is the case of Cartan motion groups.

Let $L$ be a reductive Lie group and 
$L_\theta = K_L \ltimes \p_L$ the corresponding Cartan motion group. 
Suppose that $L_\theta$ is embedded into a reductive Lie group $G$ 
as a closed subgroup. 
Without loss of generality, we choose the Cartan involution 
$\theta$ of $G$ so that $\theta|_{K_L} = 1$.
Fix $\mf{a}$ and $\Pi$ as in Section~\ref{sect:preliminaries-red}, 
and let $\mu \colon G \to \mf{a}^+$ be the Cartan projection of $G$. 
Take a maximal abelian subspace $\mf{a}_L$ of $\p_L$, 
which we regard as a closed subgroup of $G$. 
We see that $\Ad(K_L) \mf{a}_L = \p_L$, and therefore, 
$\mu(L_\theta) = \mu(\mf{a}_L)$. 
We expect that $\mu(\mf{a}_L)$ can be computed explicitly 
(or estimated from the above) in some cases. 

The closed subgroup $H'$ in Section~\ref{sect:proof-main-associated} 
is an example of such $L_\theta$.
Also, while the closed subgroups $H'$ and $H''$ in 
Section~\ref{sect:proof-main-sl} are not exactly Cartan motion groups, 
the idea is in the same spirit as the above discussion. 
This raises the following question: 

\begin{ques}
Are there any other examples of embeddings of Cartan motion groups into 
reductive Lie groups (or embeddings similar to them) 
that can be used to obtain non-existence results of 
compact Clifford--Klein forms via Proposition~\ref{prop:Kob92-2}?
\end{ques}

\subsection{An interpretation in terms of conjugacy limits}

The set $\mathcal{C}(G)$ of all closed subgroups in a Lie group $G$ 
admits a natural compact and metrizable topology, 
called the \emph{Chabauty topology}.
A sequence of closed subgroups $(H_n)_{n \in \N}$ of $G$
converges to a closed subgroup $H'$ if and only if 
the following two conditions hold 
(see e.g.\ \cite[Prop.~1.8~(1)]{Pau07}): 
\begin{itemize}
\item For any $h' \in H'$, there exists a sequence $(h_n)_{n \in \N}$ 
such that $h_n \in H_n$ for each $n \in \N$ and 
$h_n \to h'$ as $n \to +\infty$. 
\item For any sequence $(h_n)_{n \in \N}$ 
such that $h_n \in H_n$ for each $n \in \N$, 
every accumulation point of $(h_n)_{n \in \N}$ is in $H'$. 
\end{itemize}

As in the proof of Proposition~\ref{prop:sl-so}, 
put $G = \SL(p+q, \R), H = \SO_0(p,q)$, 
\[
H' = \left\{ \begin{pmatrix}k & X \\ 0 & \ell \end{pmatrix} \ \middle| \ 
X \in \M(p,q; \R), k \in \SO(p), \ell \in \SO(q) \right\}, 
\]
and 
\[
X_0 = \begin{pmatrix} q I_p & 0 \\ 0 & -pI_q \end{pmatrix}. 
\]
Notice that 
\[
\exp(tX_0) H \exp(-tX_0) \xrightarrow{t \to +\infty} H' 
\tag{$\dagger$}\label{tag:hyperbolic}
\]
with respect to the Chabauty topology. In the language of 
Cooper--Danciger--Wienhard~\cite[Def.~2.5~(2)]{CDW18}, 
$H'$ is a \emph{conjugacy limit} of $H$. Similarly,
in a more general setting that we considered in 
the proof of Theorem~\ref{thm:main-associated}, 
$H'$ is a conjugacy limit of $H$ in $G$. 
Note that, in general, the Cartan projection and 
the non-compact dimension of a conjugacy limit can be much larger than 
those of the original closed subgroup. For instance, 
in the setting of the proof of Proposition~\ref{prop:sl-so}, 
\[
\exp(tX_0) \SO(p+q) \exp(-tX_0) \xrightarrow{t \to +\infty} H' 
\tag{$\ddagger$}\label{tag:elliptic}
\]
holds, although $\mu(H') \ (= \mu(H))$ and $d(H') \ (= pq)$ 
are much larger than $\mu(\SO(p+q)) \ (= \{ 0 \})$ and 
$d(\SO(p+q)) \ (= 0)$, repsectively. 

Intuitively, the difference between (\ref{tag:hyperbolic}) and 
(\ref{tag:elliptic}) can be explained as follows. 
In the case of (\ref{tag:hyperbolic}), as $t$ tends to infinity, 
the split subspace $\exp(\p \cap \h)$ of $H$ 
collapses to the unipotent subgroup
\[
U = \left\{ \begin{pmatrix} I_p & X \\ 0 & I_q \end{pmatrix}
\ \middle| \ X \in \M(p,q; \R) \right\} 
\]
of $H'$, while the maximal compact subgroup 
$K \cap H = \SO(p) \times \SO(q)$ of $H$ is unchanged. 
Since the collapse from `split' to `unipotent' 
does not change the `non-compactness', 
the Cartan projections and the non-compact dimensions are 
expected to be unchanged. 
In contrast, in the case of (\ref{tag:elliptic}), 
the closed subgroup $\SO(p+q)$ is compact. 
As $t$ tends to infinity, a part of this compact subgroup 
collapses to the unipotent subgroup $U$ of $H'$. 
Since the collapse from `compact' to `unipotent' 
increases the `non-compactness', 
the Cartan projection and the non-compact dimension are 
expected to become larger. 
Now, this raises the following question: 

\begin{ques}
\begin{enumerate}[label = (\arabic*)]
\item Are there any other examples of a pair $(G/H, X_0)$ 
that satisfies the following two conditions?
\begin{itemize}
\item $G/H$ is a homogeneous space of reductive type. 
\item $X_0 \in \mf{z}_\g(\mf{k}_H)$, where $\mf{k}_H$ 
is the Lie algebra of the maximal compact subgroup of $H$. 
\item $X_0 \notin \mf{z}_\g(\h)$. 
\end{itemize}
\item For a pair $(G/H, X_0)$ as in (1), consider the conjugacy limit 
\[
H' = \lim_{t \to +\infty} \exp(tX_0) H \exp(-tX_0). 
\]
Do $\mu(H) = \mu(H')$ and $d(H) = d(H')$ hold? 
Can we prove the non-existence of compact Clifford--Klein forms of $G/H$ 
using Proposition~\ref{prop:Kob92-2}~(2)?
\end{enumerate}
\end{ques}

\subsection*{Acknowledgements}
The initial idea of this paper was developed 
when the author was a graduate student of Toshiyuki Kobayashi. 
I would like to thank him for his advice and encouragement. 
I would like to thank Fanny Kassel, Masatoshi Kitagawa, 
and Taro Yoshino for valuable discussions. 
This work was supported by
JSPS KAKENHI Grant Numbers 14J08233, 17H06784, 19K14529,
the Kyoto University Research Fund for Young Scientists (Start-up),
and the Program for Leading Graduate Schools, MEXT, Japan.

\end{document}